\newtheorem*{conj*}{Conjecture}
\newtheorem{theorem}{Theorem}[section]
\theoremstyle{definition}
\newtheorem*{remark}{Remark}
\theoremstyle{plain}
\theoremstyle{definition}
\newtheorem*{question}{Question}
\theoremstyle{plain}
\newtheorem{corollary}[theorem]{Corollary}
\newcommand{\ord}{\mathrm{ord}}
\newcommand{\SL}{\operatorname{SL}}
\renewcommand{\pmod}[1]{\,\,({\rm mod}\,\,{#1})}
\numberwithin{equation}{section}
\newtheoremstyle{example}
  {5pt}   
  {5pt}   
  {\normalfont}  
  {0pt}       
  {\bfseries} 
  {.}         
  {5pt plus 1pt minus 1pt} 
  {}          
\theoremstyle{example}
\newtheorem*{example}{Example}
\newtheoremstyle{definition}
  {3pt}   
  {3pt}   
  {\normalfont}  
  {0pt}       
  {\bfseries} 
  {.}         
  {5pt plus 1pt minus 1pt} 
  {}          
\theoremstyle{definition}
\def\({\left(}
\def\){\right)}
\begin{document}
\title{On Certain McKay Numbers of Symmetric Groups}

\keywords{McKay numbers, partition congruences, generating functions, hook lengths}
\subjclass[2020]{11P83, 05A15, 20C30}

\author{Annemily G. Hoganson}
\address{Department of Mathematics and Statistics, Carleton College,
Northfield, MN 55057}
\email{hogansona@carleton.edu}

\author{Thomas Jaklitsch}
\address{Department of Mathematics, University of Virginia, Charlottesville, VA 22904}
\email{gvs3ka@virginia.edu}

\thanks{}

\begin{abstract} 
For primes $\ell$ and nonnegative integers $a$, we study the partition functions $$p_\ell(a;n):= \#\{\lambda \vdash n : \ord_\ell(H(\lambda))=a\},$$ where $H(\lambda)$ denotes the product of hook lengths of a partition $\lambda$. These partition values arise as the McKay numbers $m_\ell(\ord_\ell(n!) - a; S_n)$ in the representation theory of the symmetric group. We determine the generating functions for $p_\ell(a;n)$ in terms of $p_\ell(0;n)$ and specializations of specific D'Arcais polynomials. For $\ell = 2$ and $3$, we give an exact formula for the $p_\ell(a;n)$ and prove that these values are zero for almost all $n$. For larger primes $\ell$, the $p_\ell(a;n)$ are positive for sufficiently large $n$. Despite this positivity, we prove that $p_\ell(a;n)$ is almost always divisible by $m$ for any integer $m$. Furthermore, with these results we prove several Ramanujan-type congruences. These include the congruences $$p_\ell(a;\ell^k n - \delta(a,\ell)) \equiv 0 \pmod{\ell^{k+1}},$$ for $0<a< \ell$, where $\ell = 5, 7, 11$ and $\delta(a,\ell) := (\ell^2 - 1)/24 + a\ell$, which answer a question of Ono.
\end{abstract}

\maketitle

\section{Introduction and Statement of Results}

A \emph{partition} of an integer $n$ is a non-increasing sequence of positive integers that sum to $n$. The partition function $p(n)$, which counts the number of partitions of $n$, has been widely studied. Some of the earliest significant results were the work of Ramanujan. In particular, Hardy and Ramanujan illustrated the function's rapid growth  \cite{Andrews}, proving that as $n\to \infty$, $$ p(n) \sim \frac{1}{4n\sqrt{3}}e^{\pi\sqrt{2n/3}}.$$

In addition to describing the growth of $p(n)$, Ramanujan investigated arithmetic properties of the partition function and proved the famous congruences 

\begin{equation}\label{eq: Ram congruences}
    \begin{split}
        p(5n+4) &\equiv 0 \pmod 5,\\
        p(7n+5) &\equiv 0 \pmod 7,\\
        p(11n+6) &\equiv 0 \pmod{11}. 
    \end{split}
\end{equation}

\noindent He further conjectured that these congruences could be extended to arbitrary powers of $5,7$ and $11$. This was subsequently proven by Atkin \cite{Atkin} and Watson \cite{Watson} (with a slight modification for $7$). Following these elegant results, little progress was made on describing the congruence properties of $p(n)$ until Ahlgren and Ono proved congruences for arbitrary moduli $m$ coprime to 6 \cite{OnoDis, AhlgOno}. However, these congruences are not nearly as systematic as those proven for powers of $5$, $7$, and $11$. For example, the simplest arithmetic progression which gives a congruence mod $17$ is $$p(48037937\cdot n + 1122838) \equiv 0 \pmod{17}.$$

Given the rarity of these congruences, it is natural to study the distribution of the partition function over residue classes modulo a prime $\ell$. In the case where $\ell = 2$, Parkin and Shanks conjectured that the proportion of $p(n)$ which are even (resp. odd) is $1/2$ \cite{ParkinSh}. While their conjecture is easily stated, even the simpler problem of whether either residue class modulo $2$ contains a positive density of $p(n)$ remains open. Moreover, while it has been shown that there are infinitely many $p(n)$ in each residue class modulo 2 and all primes $\ell \ge 5$ \cite{atkinNewsC, ahlgrenNewsC, kloveNewsC, kloveNewsC2, kolbergNewsC, newmanNewsC, OnoDis}, it is still unknown whether there are infinitely many $p(n)$ that are multiples of 3. As these unresolved questions show, it is difficult to understand the congruence properties of $p(n)$ directly. An alternative approach would be to divide the partitions of a given $n$ into subcollections whose congruence properties are easier to understand. 

One place to look for such a decomposition is in the representation theory of the symmetric group. Although partitions may seem purely combinatorial at first glance, $p(n)$ arises in this context as the number of irreducible representations of $S_n$. In addition, the structure of the partitions of $n$ can be used to compute these representations. For a partition $\lambda \vdash n$, the action of $S_n$ on the Young tableaux of $\lambda$ produces an irreducible representation $\rho_\lambda: S_n \to \text{GL}(V_\lambda)$ \cite{JamesKerber}. Here $V_\lambda$ is a vector space with dimension given by the famous Frame-Thrall-Robinson hook length formula. 

To make this precise, we recall that the Ferrers-Young diagram of a partition ${\lambda=\{\lambda_1 \geq \lambda_2 \dots \geq \lambda_t\}}$ depicts $\lambda$ as a left-justified two-dimensional grid with $\lambda_k$ boxes in the $k$th row. We denote the box in row $i$ and column $j$ by $(i,j)$. The \emph{hook} of $(i,j)$ consists of the box $(i,j)$ as well as the boxes directly below and directly to the right of $(i,j)$. The \emph{hook length}, $H(i,j)$, is the number of boxes in the hook of $(i,j)$. The \emph{hook product} of $\lambda$ is then 
\begin{equation}
    H(\lambda) := \prod_{(i,j)}H(i,j).
\end{equation}
The Frame-Thrall-Robinson hook length formula then connects the hook product of $\lambda$ to the representation $\rho_\lambda$ by the equation $\dim(V_\lambda) = \frac{n!}{H(\lambda)}$ \cite{FrameThR}.

\begin{example}
    The partition $\lambda=\{4,2,1\}$ has the following Ferrers-Young diagram with the hook lengths shown in each box:
    $$\begin{ytableau}
    6 & 4 & 2 & 1\cr
    3 & 1\cr
    1 \cr
    \end{ytableau}$$
In this case, we have $H(\lambda) = 144$.
\end{example}

Considering the centrality of the hook product in representation theory, one might naturally try to divide the partitions of $n$ according to the properties of their corresponding hook products. One approach is to fix a prime $\ell$ and divide the partitions of $n$ by the $\ell$-adic valuations of their hook products. To this end, we define the family of functions $\{p_\ell(a;n)\}_{a \geq 0}$, where 
\begin{equation}p_\ell(a;n):= \#\{\lambda \vdash n : \ord_\ell(H(\lambda))=a\}.\end{equation}
Since each partition has a hook product and each hook product has an $\ell$-adic valuation, we have $$p(n) = p_\ell(0;n) + p_\ell(1; n) + p_\ell(2;n) + \ldots $$
It turns out that $p_\ell(a;n)$ counts the number of irreducible representations of $S_n$ such that 

\noindent${\ell^{\ord_\ell(n!)-a} || \dim(V_\lambda)}$ \cite{JamesKerber}. Note that this is the classical McKay number $m_\ell(\ord_\ell(n!) - a; S_n)$ \cite{Nakamura}.

In this paper, for each prime $\ell$, we investigate the divisibility properties of $p_\ell(a;n)$ modulo any integer $m$. To do this, we work with the generating function of $p_\ell(a;n)$, which we denote by \begin{equation}P_\ell(a;q) := \sum_{n = 0}^\infty p_\ell(a;n) q^n.\end{equation}
As a power series in $x$, Nakamura assembles all of these generating functions as an infinite product expansion for $\sum_{a = 0}^\infty P_\ell(a;q)x^a$ (see Section \ref{sec: Nakamura} and \cite{Nakamura}). 

In order to use Nakamura's formula to study the $P_\ell(a;q)$ individually for each $a$, we consider the $q$-expansion
\begin{equation} \prod_{m=1}^\infty (1-q^m)^r  = \sum_{n=0}^\infty f(r; n)q^n = 1 - r q + \frac{r(r-3)}{2}q^2 - \frac{r(r-1)(r-8)}{6}q^3+\ldots\end{equation}
The coefficients $f(r;n)$ are known as the D'Arcais polynomials and have been studied by Newman and Serre \cite{DArcais, Newman, SerreNew}. The Nekrasov-Okounkov Hook Length Formula provides closed formulas for the D'Arcais polynomials \cite{NO} and in order to illuminate certain divisibility properties of these polynomials, we provide alternative closed formulas in Section \ref{sec: D'Arcais}. We utilize the D'Arcais polynomials to reduce the description of $P_\ell(a;q)$ to that of the single generating function $P_\ell(0;q)$. Moreover, we use these alternative formulas to study the congruence properties of the $p_\ell(a;n)$.

To make these precise, we first fix some notation. If $k$ and $a$ are nonnegative integers and $\ell$ is prime, let $[k]_\ell := \frac{\ell^k-1}{\ell-1}$ and define
\begin{equation}\label{Zdef}\mathcal{Z}_\ell(a) := \bigg\{(z_1, z_2,...) \in \bigoplus_{i\in \mathbb{N}}\mathbb{Z}_{\ge 0} \; :\;\; \sum_{k\ge 1}z_k [k]_\ell = a \bigg\}.\end{equation} Note that $\mathcal{Z}_\ell(a)$ is a finite set. We further define
\begin{equation}\Omega_\ell(a;q) := \sum_{\vec{z} \in \mathcal{Z}_\ell(a)} \prod_{k=1}^\infty q^{z_k \ell^k} \cdot \sum_{m=0}^{\lfloor z_k/\ell \rfloor}f\left(\ell^{k+1};m\right)\cdot f\left(-\ell^k; z_k - m\ell\right). \end{equation}

\noindent We then have the following explicit description of $P_\ell(a;q)$ in terms of $P_\ell(0;q)$ and this algebraic expression involving D'Arcais polynomials.

\begin{theorem}\label{thm:general pl}
If $\ell$ is prime and $a$ is a nonnegative integer, then we have
$$P_\ell(a;q) = \Omega_\ell(a;q) \cdot P_\ell(0;q).$$

\end{theorem}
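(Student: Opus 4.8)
The plan is to begin from Nakamura's two-variable generating function and extract the coefficient of $x^a$, after first identifying the role of $P_\ell(0;q)$. The key preliminary observation is that $p_\ell(0;n)$ counts exactly the $\ell$-core partitions of $n$ (those with no hook length divisible by $\ell$), so that $P_\ell(0;q)$ equals the classical $\ell$-core generating function $C_\ell(q) = \prod_{m\ge1}(1-q^{\ell m})^\ell/(1-q^m)$. I would then recast Nakamura's infinite product in the shape
\[
\sum_{a \ge 0} P_\ell(a;q)\, x^a \;=\; \prod_{k=0}^\infty C_\ell\!\left(q^{\ell^k} x^{[k]_\ell}\right)^{\ell^k},
\]
which one checks against the statement in Section \ref{sec: Nakamura}; it can also be obtained from the $\ell$-core/$\ell$-quotient recursion $G(q,x) = C_\ell(q)\, G(q^\ell x, x)^\ell$ for $G(q,x) := \sum_\lambda q^{|\lambda|} x^{\ord_\ell(H(\lambda))}$, using $|\lambda| = |\mathrm{core}| + \ell\sum_i|\lambda^{(i)}|$ and $\ord_\ell(H(\lambda)) = \sum_i|\lambda^{(i)}| + \sum_i \ord_\ell(H(\lambda^{(i)}))$. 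One then verifies by induction that the $k$-th iterate feeds $q^{\ell^k} x^{[k]_\ell}$ into $C_\ell$ with multiplicity $\ell^k$, since $\ell\,[k-1]_\ell + 1 = [k]_\ell$.

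Next I would isolate the $k=0$ factor, which is simply $C_\ell(q) = P_\ell(0;q)$ and carries no $x$, while every factor with $k\ge1$ contributes $1 + O(x^{[k]_\ell})$. Dividing out this factor reduces the theorem to the formal power series identity
\[
\prod_{k=1}^\infty C_\ell\!\left(q^{\ell^k} x^{[k]_\ell}\right)^{\ell^k} \;=\; \sum_{a \ge 0}\Omega_\ell(a;q)\, x^a.
\]
The heart of the argument is the per-factor computation. Writing $Q = q^{\ell^k} x^{[k]_\ell}$ and using $C_\ell(Q) = \prod_m (1-Q^{\ell m})^\ell/(1-Q^m)$, I would expand
\[
C_\ell(Q)^{\ell^k} = \Big(\prod_{m\ge1}(1-Q^{\ell m})^{\ell^{k+1}}\Big)\Big(\prod_{m\ge1}(1-Q^{m})^{-\ell^k}\Big),
\]
which by the defining generating function of the D'Arcais polynomials equals $\big(\sum_{m\ge0} f(\ell^{k+1};m)\,Q^{\ell m}\big)\big(\sum_{j\ge0} f(-\ell^k;j)\,Q^{j}\big)$. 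Collecting terms by the total exponent $z = \ell m + j$ yields the coefficient of $Q^z$ as $\sum_{m=0}^{\lfloor z/\ell\rfloor} f(\ell^{k+1};m)\,f(-\ell^k;z-m\ell)$, which is precisely the $k$-th inner factor appearing in $\Omega_\ell$.

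Finally I would expand the product over $k\ge1$ termwise. Because each factor is $1 + O(x^{[k]_\ell})$ and $[k]_\ell \to \infty$, the product is a well-defined formal power series in which only finitely many factors contribute to any fixed power of $x$, so multiplying out is legitimate. Writing $Q^{z} = q^{\ell^k z} x^{[k]_\ell z}$, the expansion becomes a sum over all sequences $\vec z = (z_1, z_2, \dots)$ of the product of the per-factor coefficients, weighted by $x^{\sum_k z_k [k]_\ell}$; grouping by $a = \sum_k z_k[k]_\ell$, i.e.\ by $\vec z \in \mathcal{Z}_\ell(a)$, recovers exactly $\Omega_\ell(a;q)$, and restoring the factored-out $P_\ell(0;q)$ gives the claim. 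The main obstacle I anticipate is bookkeeping rather than conceptual: matching Nakamura's stated product to the $\prod_k C_\ell(q^{\ell^k}x^{[k]_\ell})^{\ell^k}$ form with the correct exponents $\ell^k$ and $[k]_\ell$, and justifying the interchange of the infinite product with coefficient extraction in $x$, which the growth of $[k]_\ell$ guarantees. The D'Arcais step itself is a short, clean computation once the $\ell$-core eta-quotient is in hand.
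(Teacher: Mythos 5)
Your proposal is correct and follows essentially the same route as the paper: isolate the $k=0$ factor of Nakamura's product as $P_\ell(0;q)$, expand each remaining factor via the D'Arcais generating function into $\bigl(\sum_m f(\ell^{k+1};m)Q^{\ell m}\bigr)\bigl(\sum_j f(-\ell^k;j)Q^j\bigr)$, reindex by $z=\ell m+j$, and group the multi-product by $\vec z\in\mathcal Z_\ell(a)$. Your repackaging of each factor as $C_\ell\bigl(q^{\ell^k}x^{[k]_\ell}\bigr)^{\ell^k}$ is a pleasant observation but does not change the substance of the computation.
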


Theorem \ref{thm:general pl} shows that $p_\ell(a;n)$ is a simple algebraic combination of specializations of D'Arcais polynomials and values $p_\ell(0;m)$ for a finite set of $m$ depending on $a$ and $n$. If $a < \ell$, Theorem \ref{thm:general pl} reduces to the following expression.

\begin{corollary}\label{thm:pl polys}
    If $\ell$ is prime and $a$ is an integer such that $0 \le a < \ell$, then $$ P_\ell(a;q) = f(-\ell;a)q^{a\ell} \cdot P_\ell(0;q).$$
\end{corollary}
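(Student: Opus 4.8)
The plan is to deduce the Corollary directly from Theorem \ref{thm:general pl}, which already expresses $P_\ell(a;q) = \Omega_\ell(a;q)\cdot P_\ell(0;q)$. Thus it suffices to show that under the hypothesis $0 \le a < \ell$ the factor $\Omega_\ell(a;q)$ collapses to the single monomial $f(-\ell;a)\,q^{a\ell}$. All of the work is therefore in simplifying the combinatorial sum defining $\Omega_\ell(a;q)$.

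First I would pin down the index set $\mathcal{Z}_\ell(a)$. The key arithmetic observation is that $[1]_\ell = 1$, while $[k]_\ell = \ell^{k-1} + \cdots + \ell + 1 \ge \ell + 1$ for every $k \ge 2$. Since $a < \ell < \ell + 1$, any vector $\vec{z} \in \mathcal{Z}_\ell(a)$ satisfying $\sum_{k \ge 1} z_k [k]_\ell = a$ must have $z_k = 0$ for all $k \ge 2$; otherwise the left-hand side would already exceed $a$. The remaining constraint $z_1 [1]_\ell = z_1 = a$ then forces $z_1 = a$. Hence $\mathcal{Z}_\ell(a) = \{(a,0,0,\ldots)\}$ is a singleton.

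With the index set reduced to one vector, I would evaluate the infinite product in the definition of $\Omega_\ell(a;q)$ factor by factor. For each $k \ge 2$ the entry $z_k = 0$ gives $q^{0} = 1$ and an inner sum running only over $m = 0$, namely $f(\ell^{k+1};0)\cdot f(-\ell^k;0)$; since $f(r;0) = 1$ is the constant term of $\prod_{m=1}^\infty (1-q^m)^r$, each such factor equals $1$. For $k = 1$ we have $z_1 = a$ and $\lfloor a/\ell \rfloor = 0$ because $a < \ell$, so the inner sum again collapses to its $m = 0$ term, giving $f(\ell^2;0)\cdot f(-\ell;a) = f(-\ell;a)$; together with the prefactor $q^{a\ell}$ this contributes $f(-\ell;a)\,q^{a\ell}$. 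Multiplying over all $k$ yields $\Omega_\ell(a;q) = f(-\ell;a)\,q^{a\ell}$, and substituting into Theorem \ref{thm:general pl} completes the proof.

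I do not expect a genuine obstacle here: the statement is a clean specialization of the general formula, and the only real content is the observation that the hypothesis $a < \ell$ forces $[k]_\ell > a$ for $k \ge 2$, so that every sum indexed by $\mathcal{Z}_\ell(a)$ or by $\lfloor z_k/\ell\rfloor$ degenerates to a single term. The one point deserving care is the bookkeeping of the two vanishing-index conventions ($z_k = 0$ versus $\lfloor a/\ell\rfloor = 0$) together with the repeated use of $f(r;0) = 1$, but these are routine once the index set has been identified.
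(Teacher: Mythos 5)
Your proposal is correct and follows exactly the paper's own argument: the hypothesis $a<\ell$ forces $[k]_\ell>a$ for $k\ge 2$, so $\mathcal{Z}_\ell(a)=\{(a,0,0,\ldots)\}$, and then $\lfloor a/\ell\rfloor=0$ together with $f(r;0)=1$ collapses $\Omega_\ell(a;q)$ to $f(-\ell;a)q^{a\ell}$. There is nothing to add.
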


\begin{example}
    If $0\le a< \ell$, then $p_\ell(a;n)$ is the product of $p_\ell(0;n-a\ell)$ and a polynomial in $\ell$.
    For $a = 1, 2,3$ and $\ell \ge 5$, Corollary \ref{thm:pl polys} gives
    \begin{align*}
        p_\ell(1;n) &= \ell \cdot p_\ell(0; n-\ell), \\
        p_\ell(2;n) &= \frac{\ell(\ell+3)}{2} \cdot p_\ell(0; n - 2\ell),\\
        p_\ell(3; n) &= \frac{\ell(\ell+1)(\ell+8)}{6} \cdot p_\ell(0; n - 3\ell).
    \end{align*}
    These formulas illustrate the simplicity of this relationship for $0 \le a<\ell$.
\end{example}

Reducing the study of $P_\ell(a;q)$ to that of $P_\ell(0;q)$ allows us to use properties of $p_\ell(0;n)$, which is a well-studied partition function. It counts the number of $\ell$-core partitions of an integer $n$ (partitions where $\ell$ does not divide any hook length). In the cases when $\ell = 2$ or $3$, the generating functions for $p_\ell(0;n)$ are given by (see p. 334 of \cite{GranOno}) \begin{equation} \label{2core}
    P_2(0;q) = \sum_{m=0}^\infty q^{\frac{m^2+m}{2}},
\end{equation}  \begin{equation} \label{3core} P_3(0;q) = \sum_{m=0}^\infty  \left(\sum_{d | 3m+1}\left(\frac{d}{3}\right) \right) q^m. \end{equation} These expressions yield the following formulas.

\begin{theorem}\label{thm: 2-3 formulas}
    The following are true:
    
    \begin{enumerate}
        \item For all $n$, we have $$p_2(1; n) =  \begin{cases}2 &\text{if } n-2 = \frac{m(m+1)}{2} \;\text{for some } m\in \mathbb{N},\\ 0 &\text{otherwise.}\end{cases}$$
        \item For $n>2$, we have $$p_3(1;n) = 3\sum_{d | 3n - 8}\left(\frac{d}{3}\right).$$
        \item For $n>5,$ we have $$ p_3(2;n) = 9\sum_{d | 3n-17}\left(\frac{d}{3}\right).$$
    \end{enumerate}
\end{theorem}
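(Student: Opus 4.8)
The plan is to obtain all three identities as immediate consequences of Corollary \ref{thm:pl polys}, which for a prime $\ell$ and $0 \le a < \ell$ asserts that $P_\ell(a;q) = f(-\ell;a)\,q^{a\ell}\,P_\ell(0;q)$. This reduces each $p_\ell(a;n)$ to a single D'Arcais specialization $f(-\ell;a)$ multiplied by a shifted coefficient of the explicitly known series for $P_\ell(0;q)$. The only inputs are the values of $f(-\ell;a)$, which I would read directly off the displayed expansion $\prod_{m\ge 1}(1-q^m)^r = 1 - rq + \frac{r(r-3)}{2}q^2 - \cdots$, together with the closed forms (\ref{2core}) and (\ref{3core}) for $P_2(0;q)$ and $P_3(0;q)$.

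For part (1) I would set $\ell = 2$ and $a = 1$. The linear coefficient gives $f(-2;1) = 2$, so Corollary \ref{thm:pl polys} yields $P_2(1;q) = 2q^2 P_2(0;q)$. Substituting (\ref{2core}) gives $P_2(1;q) = 2\sum_{m\ge 0} q^{\,2 + (m^2+m)/2}$, and comparing coefficients of $q^n$ shows that $p_2(1;n) = 2$ precisely when $n - 2 = (m^2+m)/2$ for some $m \in \mathbb{N}$, and is $0$ otherwise.

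For parts (2) and (3) I would take $\ell = 3$ with $a = 1$ and $a = 2$ respectively, using that $f(-3;1) = 3$ and $f(-3;2) = \frac{(-3)(-6)}{2} = 9$. Corollary \ref{thm:pl polys} then gives $P_3(1;q) = 3q^3 P_3(0;q)$ and $P_3(2;q) = 9q^6 P_3(0;q)$. Inserting (\ref{3core}) and extracting the coefficient of $q^n$---equivalently, setting $m = n-3$ and $m = n-6$---produces $p_3(1;n) = 3\sum_{d\mid 3n-8}\left(\frac{d}{3}\right)$ and $p_3(2;n) = 9\sum_{d\mid 3n-17}\left(\frac{d}{3}\right)$, since $3(n-3)+1 = 3n-8$ and $3(n-6)+1 = 3n-17$.

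There is no genuine obstacle here beyond careful bookkeeping: one must confirm the three D'Arcais values and track the shift $q^{a\ell}$. The stated ranges $n>2$ in (2) and $n>5$ in (3) are exactly the nonnegativity conditions $m = n - a\ell \ge 0$ ensuring the reindexed term lies within the summation range of (\ref{3core}); for smaller $n$ the extracted coefficient comes from outside the series and vanishes.
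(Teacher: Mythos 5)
Your proposal is correct and follows exactly the paper's own argument: apply Corollary \ref{thm:pl polys} to get $P_2(1;q)=2q^2P_2(0;q)$, $P_3(1;q)=3q^3P_3(0;q)$, and $P_3(2;q)=9q^6P_3(0;q)$, then substitute (\ref{2core}) and (\ref{3core}) and track the exponent shift. The D'Arcais values $f(-2;1)=2$, $f(-3;1)=3$, $f(-3;2)=9$ and the reindexings $3(n-3)+1=3n-8$, $3(n-6)+1=3n-17$ all check out.
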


\begin{remark}
Note that it is possible to obtain formulas for $p_2(a;n)$ and $p_3(a;n)$ for any nonnegative integer $a$ using Theorem \ref{thm:general pl}, (\ref{2core}), and (\ref{3core}). However, these formulas become significantly complex as $a$ becomes larger. 
\end{remark}

\begin{question}
Are there combinatorial proofs of Theorem \ref{thm:general pl} and the preceding theorem?
\end{question}

\bigskip
\noindent We use these formulas to prove the following results on the paucity of nonzero values of these $p_\ell(a;n)$.

\begin{corollary}\label{thm:density23}
    If $a$ is a nonnegative integer, then we have
    $$\#\{n \le X : p_2(a;n) \not= 0 \} = O_a\left(\sqrt{X}\right)
    $$
    and 
    $$\#\{n \le X : p_3(a;n) \not= 0 \} = O_a\left(\frac{X}{\sqrt{\log X}}\right).
    $$
\end{corollary}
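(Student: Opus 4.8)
The plan is to reduce the support of $p_\ell(a;\cdot)$ to that of the $\ell$-core counting function $p_\ell(0;\cdot)$ using Theorem~\ref{thm:general pl}, and then to estimate the latter directly from the explicit generating functions (\ref{2core}) and (\ref{3core}).

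First I would record that, for fixed $a$ and fixed prime $\ell$, the series $\Omega_\ell(a;q)$ is in fact a polynomial in $q$. Indeed, $\mathcal{Z}_\ell(a)$ is a finite set, and since each $\vec z = (z_1, z_2, \dots) \in \mathcal{Z}_\ell(a)$ has only finitely many nonzero entries, the corresponding term $\prod_{k\ge 1}\big(q^{z_k\ell^k}\sum_{m=0}^{\lfloor z_k/\ell\rfloor} f(\ell^{k+1};m)\, f(-\ell^k;z_k-m\ell)\big)$ is a constant multiple of the single monomial $q^{e(\vec z)}$, where $e(\vec z):=\sum_{k\ge 1} z_k\ell^k$. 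Writing $\Omega_\ell(a;q) = \sum_{\vec z \in \mathcal{Z}_\ell(a)} c_{\vec z}\, q^{e(\vec z)}$ and applying Theorem~\ref{thm:general pl}, I get
$$p_\ell(a;n) = \sum_{\vec z \in \mathcal{Z}_\ell(a)} c_{\vec z}\, p_\ell(0;\, n - e(\vec z)).$$
In particular $p_\ell(a;n)\neq 0$ forces $p_\ell(0; n-e(\vec z))\neq 0$ for at least one $\vec z$, so the support of $p_\ell(a;\cdot)$ lies in the union of the finitely many shifts $e(\vec z) + \operatorname{supp} p_\ell(0;\cdot)$. This yields the clean bound
$$\#\{n\le X : p_\ell(a;n)\neq 0\} \le |\mathcal{Z}_\ell(a)|\cdot \#\{n\le X : p_\ell(0;n)\neq 0\},$$
reducing everything to counting the support of $p_\ell(0;\cdot)$.

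For $\ell = 2$, I would read off from (\ref{2core}) that $p_2(0;n)$ equals $1$ when $n$ is a triangular number $m(m+1)/2$ and $0$ otherwise. The number of triangular numbers up to $X$ is $O(\sqrt X)$, so the displayed reduction immediately gives $\#\{n\le X : p_2(a;n)\neq 0\} = O_a(\sqrt X)$.

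For $\ell = 3$, I would use (\ref{3core}) to write $p_3(0;n) = \sum_{d\mid 3n+1}\big(\tfrac{d}{3}\big)$. Since $\big(\tfrac{\cdot}{3}\big)$ is the nontrivial quadratic character $\chi$ modulo $3$ and the divisor sum $\sum_{d\mid N}\chi(d)$ is multiplicative, a prime-by-prime computation shows this sum is a product of nonnegative integers that vanishes precisely when some prime $p\equiv 2 \pmod{3}$ divides $N = 3n+1$ to an odd power; equivalently, $p_3(0;n)\neq 0$ iff $3n+1$ is representable by the norm form $x^2+xy+y^2$ of $\Q(\sqrt{-3})$. The main analytic input, and the one real obstacle, is then the Landau-type estimate for such integers: the number of $N\le Y$ whose prime factors congruent to $2$ modulo $3$ all occur to even powers is $\ll Y/\sqrt{\log Y}$ (the Eisenstein analogue of the sum-of-two-squares count, provable by the Selberg--Delange method). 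Taking $Y = 3X+1$ bounds the support of $p_3(0;\cdot)$ up to $X$ by $O(X/\sqrt{\log X})$, and combined with the reduction this gives $\#\{n\le X : p_3(a;n)\neq 0\} = O_a(X/\sqrt{\log X})$. I expect the only delicate points to be the bookkeeping that $\Omega_\ell(a;q)$ is genuinely a finite polynomial and the precise invocation of the Landau-type estimate restricted to the progression $N\equiv 1 \pmod{3}$.
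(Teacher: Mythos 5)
Your proposal is correct and follows essentially the same route as the paper: reduce to the support of $p_\ell(0;\cdot)$ via Theorem \ref{thm:general pl} (using that $\mathcal{Z}_\ell(a)$ is finite, so the count is bounded by $|\mathcal{Z}_\ell(a)|$ times the count for $a=0$), then count triangular numbers for $\ell=2$ and invoke the Landau-type $X/\sqrt{\log X}$ bound for $\ell=3$. The only difference is cosmetic: the paper cites Granville--Ono and ``elementary sieve theory'' for the $\ell=3$ estimate, whereas you sketch the Selberg--Delange/norm-form argument explicitly.
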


\begin{example}
To illustrate the results of Corollary \ref{thm:density23}, we provide numerical data for the cases ${a=0}$ and $X = 10^k$, where $1 \le k \le 8$. 
To this end, define $$\gamma_\ell(a; X) := \frac{\#\{1\le n \le X : p_\ell(a;n) \ne 0\}}{X}. $$
The table below gives the proportion of $p_\ell(0;n) \ne 0$ for $\ell = 2,3$. 
\begin{center}
\begin{table}[H]
\begin{tabular}{ |c|c|c|c|c| } 
 \hline
 X & $\gamma_2(0;X)$ & $\gamma_3(0; X)$\\ \hline
 $10^1$ & $.40000$ & $.80000$ \\
 $10^2$ & $.13000$ & $.57000$ \\
 $10^3$ & $.04400$ & $.47400$ \\ 
 $10^4$ & $.01400$ & $.41340$ \\
 $10^5$ & $.00446$ & $.37064$ \\
 $10^6$ & $.00141$ & $.33893$ \\
 $10^7$ & $.00044$ & $.31424$ \\
 $10^8$ & $.00014$ & $.29431$ \\
\hline
\end{tabular}
\caption{$\gamma_\ell(0;X)$ for $\ell = 2,3$ and $X \le 10^8$}
\vspace{-6mm}
\end{table}
\end{center}
\end{example}

A natural next question is whether the vanishing of $p_\ell(a;n)$ generalizes to arbitrary primes. The work of Granville and Ono shows that this is not the case \cite{GranOno}. In particular, they proved that the number of $k$-core partitions of any integer $n$ is positive for all integers $k\ge 4$. Although the $p_\ell(a;n)$ do not vanish for primes $\ell \ge 5$, one might hope for vanishing mod $m$. In this vein, we prove that $p_\ell(a;n)$ is divisible by any integer $m$ asymptotically $100\%$ of the time for any prime $\ell$.


\begin{theorem}\label{thm:density}
    If $\ell$ is an odd prime, $a$ is a nonnegative integer, and $m$ is a positive integer, then there exists a real number $\alpha_{\ell, m} > 0$ such that 
    $$\#\{n \le X : p_\ell(a;n) \not\equiv 0 \pmod{m} \} = \text{O}_{\ell,m, a}\left(\frac{X}{(\log X)^{\alpha_{\ell, m}}}\right).
    $$
    In particular $p_\ell(a;n) \equiv 0 \pmod{m}$ for almost all $n$.
\end{theorem}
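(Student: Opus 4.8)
The plan is to reduce everything to the single function $p_\ell(0;n)$ counting $\ell$-core partitions, and then invoke the theory of modular forms. First I would apply Theorem~\ref{thm:general pl} to write $P_\ell(a;q) = \Omega_\ell(a;q)\cdot P_\ell(0;q)$. Since $\mathcal{Z}_\ell(a)$ is finite and each of its vectors has only finitely many nonzero entries, $\Omega_\ell(a;q)$ is in fact a \emph{polynomial} in $q$ whose coefficients are integer specializations of D'Arcais polynomials; write $\Omega_\ell(a;q)=\sum_{j} c_j q^{e_j}$ with $c_j\in\mathbb{Z}$ and finitely many $j$. Comparing coefficients gives
$$p_\ell(a;n)=\sum_j c_j\, p_\ell(0;n-e_j).$$
Consequently, if $p_\ell(a;n)\not\equiv 0 \pmod{m}$, then $p_\ell(0;n-e_j)\not\equiv 0 \pmod{m}$ for at least one $j$. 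Hence the counting set for $p_\ell(a;\cdot)$ is contained in a union of finitely many translates of the counting set for $p_\ell(0;\cdot)$, and it suffices to prove the bound in the case $a=0$.

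For $a=0$, the plan is to realize $P_\ell(0;q)$ as a modular form. Using the Dedekind eta function $\eta(\tau)=q^{1/24}\prod_{m\ge1}(1-q^m)$ and the classical product expansion $P_\ell(0;q)=\prod_{m\ge1}(1-q^{\ell m})^\ell/(1-q^m)$ for the $\ell$-core generating function, one has
$$g_\ell(\tau):=q^{(\ell^2-1)/24}P_\ell(0;q)=\frac{\eta(\ell\tau)^\ell}{\eta(\tau)}=\sum_{N}p_\ell\!\left(0;N-\tfrac{\ell^2-1}{24}\right)q^N.$$
Since $\ell$ is odd, the weight $(\ell-1)/2$ is a positive integer. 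I would then verify, via the standard transformation law for eta-quotients (Ligozat's criterion), that $g_\ell$ is a holomorphic modular form of weight $(\ell-1)/2$ on $\Gamma_0(\ell)$ with quadratic nebentypus, checking in particular that its orders of vanishing at the two cusps of $\Gamma_0(\ell)$ are nonnegative (they work out to $(\ell^2-1)/24$ at $\infty$ and $0$ at the cusp $0$). Its Fourier coefficients are integers, being values of $p_\ell(0;\cdot)$.

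With $g_\ell$ identified as a holomorphic modular form of positive integral weight with integer coefficients, I would apply Serre's theorem on the divisibility of Fourier coefficients of modular forms. Writing $g_\ell = \sum_N a_\ell(N) q^N$, so that $a_\ell(N) = p_\ell(0;N-(\ell^2-1)/24)$, Serre's theorem supplies a constant $\alpha_{\ell,m}>0$ with
$$\#\{N\le X : a_\ell(N) \not\equiv 0 \pmod{m}\}=O_{\ell,m}\!\left(\frac{X}{(\log X)^{\alpha_{\ell,m}}}\right).$$
Because the index is shifted only by the fixed constant $(\ell^2-1)/24$, this translates directly into the claimed bound for $p_\ell(0;n)$, and the reduction of the first paragraph then yields the bound for general $a$ (absorbing the finitely many translates and the factor $\max_j|c_j|$ into the implied constant).

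The main subtlety I expect is that $g_\ell$ is \emph{not} a cusp form: it vanishes only at the cusp $\infty$ and not at the cusp $0$, so for small $\ell$ (e.g.\ $\ell=5$, where $\dim M_2(\Gamma_0(5))=1$) it is essentially an Eisenstein series. Thus one cannot merely quote the cusp-form version of the divisibility theorem; one must use the version valid on the full space $M_k(\Gamma_0(N),\chi)$. Equivalently, decomposing $g_\ell=E+h$ into its Eisenstein and cuspidal parts, the cuspidal contribution $h$ is controlled by the Galois-theoretic (Chebotarev) argument underlying Serre's theorem, while the Eisenstein contribution $E$ reduces to the elementary sieve statement that a fixed integer $m$ divides the relevant twisted divisor sum for almost all $n$ (the density of exceptions being governed by the scarcity of integers lacking a suitable prime factor in a fixed congruence class). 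Handling both pieces and producing a single exponent $\alpha_{\ell,m}$ is where the care lies.
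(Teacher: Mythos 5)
Your proposal follows essentially the same route as the paper: reduce to $a=0$ by expanding $\Omega_\ell(a;q)$ as a finite integer polynomial via Theorem \ref{thm:general pl}, realize $q^{(\ell^2-1)/24}P_\ell(0;q)=\eta(\ell z)^\ell/\eta(z)$ as a holomorphic form of weight $(\ell-1)/2$ on $\Gamma_0(\ell)$ with quadratic Nebentypus (your cusp computations, $(\ell^2-1)/24$ at $\infty$ and $0$ at the cusp $0$, match the paper's), and then apply the Serre-type divisibility theorem for the full space $M_k(\Gamma_0(N),\chi)$ rather than just the cuspidal subspace. Your closing remark about the form not being cuspidal is exactly why the paper quotes the $M_k$ version of the theorem, so that concern is already absorbed into the cited result.

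There is one concrete gap: the case $\ell=3$, which the theorem's hypothesis (``$\ell$ an odd prime'') includes. For $\ell=3$ your construction breaks down: the exponent $(\ell^2-1)/24=1/3$ is not an integer, and Ligozat's first congruence $\sum_{\delta\mid N}\delta r_\delta=\ell^2-1\equiv 0\pmod{24}$ fails (it equals $8$), so $\eta(3z)^3/\eta(z)$ does not satisfy the transformation law on $\Gamma_0(3)$ as written. You would either need a rescaled eta-quotient at a higher level or a separate argument. The paper sidesteps this by observing that $\ell=3$ is already covered by Corollary \ref{thm:density23}: since $p_3(a;n)=0$ for all but $O_a(X/\sqrt{\log X})$ of the $n\le X$, the divisibility statement is trivially true there, and the eta-quotient argument is only invoked for $\ell\ge 5$ (where $\ell^2\equiv 1\pmod{24}$ makes everything integral). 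Adding that one sentence would close the gap.
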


We achieve this result by studying the modularity of $P_\ell(0;q)$ and combining a result of Serre \cite{Serre} with Theorem \ref{thm:general pl}.

\begin{example}
To illustrate the results of Theorem \ref{thm:density}, we provide numerical data for the case of $p_5(0;n)$. First, define $$\delta_\ell(a; m ;X) := \frac{\#\{1\le n \le X : p_\ell(a; n) \not\equiv 0 \pmod{m}\}}{X}.$$
We compute examples of $\delta_5(0;m;X)$ for $m = 2,3,4,5$ and $X = 10^k$ where $1 \le k\le 6$.
\begin{center}
\begin{table}[H]
\begin{tabular}{ |c|c|c|c|c| } 
 \hline
 X & $\delta_5(0; 2; X)$ & $\delta_5(0; 3; X)$ & $\delta_5(0; 4; X)$ & $\delta_5(0; 5; X)$ \\ \hline
 $10^1$ & $.6000$ & $.7000$ & $.9000$ & $.7000$ \\
 $10^2$ & $.2300$ & $.4700$ & $.5600$ & $.6300$ \\ 
 $10^3$ & $.0760$ & $.3640$ & $.3510$ & $.5650$ \\
 $10^4$ & $.0244$ & $.3040$ & $.2424$ & $.5224$ \\
 $10^5$ & $.0077$ & $.2667$ & $.1790$ & $.4923$ \\
 $10^6$ & $.0024$ & $.2404$ & $.1399$ & $.4690$ \\
 \hline
\end{tabular}
\caption{$\delta_5(0; m; X)$ for $m = 2, 3, 4, 5$ and $X\le 10^6$.}
\vspace{-8mm}
\end{table}
\end{center}
Note that the rate of convergence depends on $m$, and is in general quite slow.
\end{example}

Given that $p_\ell(a;n)$ is almost always divisible by any integer $m$, a natural following question is to ask whether there are explicit congruences of the form $p_\ell(a;An+B) \equiv 0 \pmod{m}$. For the usual partition function, the most famous such congruences are the Ramanujan congruences. We recall in particular the Ramanujan congruences modulo arbitrary powers of $5$, $7$, and $11$ \cite{Atkin, Watson}:
\begin{equation}
    \begin{split}
        p(5^m n +\delta_{5,k}) &\equiv 0 \pmod{5^m},\\
        p(7^m n +\delta_{7,k}) &\equiv 0 \pmod{7^{[m/2]+1}},\\
        p(11^m n +\delta_{11,k}) &\equiv 0 \pmod{11^m},
    \end{split}
\end{equation}
where $\delta_{p,k}:=24^{-1}\pmod{p^k}$.

In \cite{OnoMcK}, Ono proved that if $\ell = 5,7,$ or $11$, and $m=1$ or $2$, then for all $a \geq 0$, $$p_\ell(a; \ell^m n - \delta(\ell)) \equiv 0 \pmod{\ell^m},$$ where $\delta(\ell) := (\ell^2 - 1)/24$. While these congruences do align with the congruences of $p(n)$ modulo $5^2, 7^2,$ and $11^2$, they cannot be extended to align with the congruences of $p(n)$ modulo $5^3, 7^3,$ and $11^3$. In particular, we have that
$$p_5(14;99) = 5594200 \equiv 75 \not\equiv 0 \pmod{5^3}.$$

In view of this example, Ono speculated that Ramanujan's congruences modulo powers of $5$, $7$, and $11$ have modified generalizations to the $p_\ell(a;n)$ functions. We show that this is indeed the case for $0<a\le\ell$. Moreover, we find similar congruences modulo $\ell^3$ and $\ell^4$ for $\ell < a < \ell(\ell+1)$. To this end, let ${\delta(a;\ell) := \frac{\ell^2-1}{24}-a \ell}$. Then we have the following.

\begin{theorem}\label{thm:congruences}
    If $\ell = 5,7$, or $11$, then for every nonnegative integer $n$, the following are true:
    \begin{enumerate}
        \item For $0 < a < \ell$ and $m \ge 1$, we have $$p_\ell(a; \ell^m n -\delta(a;\ell))\equiv 0 \pmod{\ell^{m+1}}.$$
        \item For $a =\ell$ and $m \ge 1$, we have $$p_\ell(\ell; \ell^m n -\delta(\ell;\ell))\equiv 0 \pmod{\ell^m}.$$
        \item For $\ell + 1 < a < 2\ell$, we have
        $$p_\ell(a; \ell^3 n -\delta(a;\ell))\equiv 0 \pmod{\ell^4}.$$
        \item For $a = 2\ell$ We have $$p_\ell(2\ell; \ell^4 n -\delta(2\ell;\ell))\equiv 0 \pmod{\ell^4}.$$
        \item For $0 < a < \ell(\ell+1)$, we have 
        $$p_\ell(a; \ell^3 n -\delta(a;\ell))\equiv 0 \pmod{\ell^3}.$$
    \end{enumerate}
\end{theorem}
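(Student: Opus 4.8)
The plan is to reduce all five statements to a single congruence for $\ell$-core partitions and then feed in the $\ell$-divisibility of the relevant D'Arcais coefficients. Writing $\delta(\ell):=(\ell^2-1)/24$ as in Ono's work, the key input I will isolate is the \emph{base congruence}
\[(\star)\qquad p_\ell(0;\ell^m n-\delta(\ell))\equiv 0 \pmod{\ell^m}\qquad(\ell=5,7,11,\ m\ge 1),\]
which strengthens Ono's $m\le 2$ result to all $m$. Granting $(\star)$, Theorem \ref{thm:general pl} does the bookkeeping. For $\vec z\in\mathcal{Z}_\ell(a)$ the associated monomial in $\Omega_\ell(a;q)$ carries exponent $E(\vec z)=\sum_k z_k\ell^k$, and since $\sum_k z_k[k]_\ell=a$ one computes $E(\vec z)=(\ell-1)a+s(\vec z)$ where $s(\vec z):=\sum_k z_k$. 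Hence $\delta(a;\ell)+E(\vec z)=\delta(\ell)+(s(\vec z)-a)$, and $a-s(\vec z)=\ell\sum_{k\ge 2}z_k[k-1]_\ell=:\ell\,t(\vec z)\ge 0$. With $C(\vec z):=\prod_k\sum_j f(\ell^{k+1};j)f(-\ell^k;z_k-j\ell)$ the coefficient of that monomial, Theorem \ref{thm:general pl} gives
\[p_\ell(a;\ell^m n-\delta(a;\ell))=\sum_{\vec z\in\mathcal{Z}_\ell(a)}C(\vec z)\,p_\ell\big(0;\ \ell^m n+\ell\,t(\vec z)-\delta(\ell)\big).\]
Since $\ell^m n+\ell\,t(\vec z)$ is divisible by $\ell^{\min(m,\,1+\ord_\ell t(\vec z))}$, the base congruence $(\star)$ forces each summand to be divisible by $\ell^{\,\ord_\ell C(\vec z)+\min(m,\,\ord_\ell(a-s(\vec z)))}$, with the diagonal term $\vec z=(a,0,0,\dots)$ (where $t=0$) contributing the full factor $\ell^m$.

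The heart of the argument is $(\star)$. I would use that the shifted generating function
\[g_\ell(\tau):=\frac{\eta(\ell\tau)^\ell}{\eta(\tau)}=\sum_{N\ge 0}p_\ell(0;N-\delta(\ell))\,q^N\]
is a modular form of weight $k=(\ell-1)/2$ on $\Gamma_0(\ell)$ with character $\left(\tfrac{\cdot}{\ell}\right)$, so that $p_\ell(0;\ell^m n-\delta(\ell))$ is the $q^n$-coefficient of $g_\ell\mid U_\ell^{m}$. For $\ell=5$ this is transparent: $g_5$ is the weight-$2$ Eisenstein eigenform with $p_5(0;N-1)=\sum_{d\mid N}\left(\tfrac{d}{5}\right)\tfrac Nd$, a multiplicative function whose value at $5^m$ is $5^m$, so $p_5(0;5^m n-1)=5^m\cdot(\cdots)$. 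For $\ell=7,11$ I would decompose $g_\ell$ into Hecke eigenforms; on each newform constituent the $U_\ell$-eigenvalue is either $0$ or of absolute value $\ell^{(k-1)/2}$, hence divisible by $\ell$, and $a_h(\ell^m n)=a_h(\ell)^m a_h(n)$ gives $\ord_\ell a_h(\ell^m n)\ge m$. Iterating then yields $g_\ell\mid U_\ell^m\equiv 0\pmod{\ell^m}$, which is $(\star)$.

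It remains to bound $\ord_\ell C(\vec z)$, for which I invoke the D'Arcais divisibility of Section \ref{sec: D'Arcais}: from $\prod_m(1-q^m)^{\pm\ell^k}\equiv\prod_m(1-q^{\ell m})^{\pm\ell^{k-1}}\pmod{\ell^k}$ one gets $\ord_\ell f(\pm\ell^k;j)\ge k$ whenever $\ell\nmid j$, together with $f(\ell^2;1)=-\ell^2$ and $f(-\ell^2;1)=\ell^2$. The five cases are now routine. For $0<a<\ell$ the only $\vec z$ is $(a,0,\dots)$, so $C=f(-\ell;a)$ is divisible by $\ell$ (as $\ell\nmid a$) and the diagonal supplies $\ell^m$, giving $\ell^{m+1}$; this is (1). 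For $a=\ell$ again $\vec z=(\ell,0,\dots)$, $C=f(-\ell;\ell)-\ell^2$, and the diagonal factor $\ell^m$ alone gives (2). When $\ell+1<a<2\ell$, $a=2\ell$, or $0<a<\ell(\ell+1)$, only $z_1,z_2$ are nonzero with $z_1=a-z_2(\ell+1)$ and $1\le z_2\le\ell-1$, so $\ord_\ell(a-s(\vec z))=\ord_\ell(z_2\ell)=1$; using $\ord_\ell f(-\ell^2;z_2)\ge 2$ (and $z_1<\ell$, whence $\ord_\ell f(-\ell;z_1)\ge 1$, in cases (3),(4)) one checks $\ord_\ell C(\vec z)\ge 3$ for the $z_2=1$ terms of (3),(4) and $\ord_\ell C(\vec z)\ge 2$ for the $z_2\ge 1$ terms of (5), while the diagonal $z_2=0$ term contributes $\ell^m$ (plus $\ord_\ell f(-\ell;a)\ge 1$ in (3)). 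Combined with the core factor $\ell^{\min(m,1)}=\ell$, these yield $\ell^4$ in (3),(4) and $\ell^3$ in (5).

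The main obstacle is the base congruence $(\star)$ in full generality. Ono only records $m\le 2$, and the improvement to $\ell^{m+1}$ in part (1) genuinely requires $(\star)$ for all $m$. For $\ell=5$ the Eisenstein/multiplicative formula settles it cleanly, but for $\ell=7,11$ the form $g_\ell$ is a combination of an Eisenstein part and a cuspidal (CM) newform, and the delicate point is to control $U_\ell^m$ uniformly in $m$ — in particular to ensure that the eigenform decomposition of $g_\ell$ does not introduce denominators of negative $\ell$-valuation, so that the constituent divisibilities $\ord_\ell a_h(\ell^m n)\ge m$ genuinely pass to the integral coefficients $p_\ell(0;\ell^m n-\delta(\ell))$. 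Everything else is the deterministic case analysis above.
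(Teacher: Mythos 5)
Your reduction is exactly the paper's: you combine Theorem \ref{thm:general pl} with the $\ell$-divisibility of the D'Arcais values $f(\pm\ell^k;j)$ for $\ell\nmid j$ and with the base congruence $p_\ell(0;\ell^m n-(\ell^2-1)/24)\equiv 0\pmod{\ell^m}$, and your bookkeeping identity $\delta(a;\ell)+E(\vec z)=\delta(\ell)-\ell\,t(\vec z)$ with $t(\vec z)=\sum_{k\ge 2}z_k[k-1]_\ell\ge 0$ is a clean (and correct) way to organize the shifts $\ell^m n + \ell t(\vec z)$ that the paper handles case by case. Your enumeration of $\mathcal{Z}_\ell(a)$ in each of the five ranges, the valuations $\ord_\ell C(\vec z)$ you extract, and the resulting exponents all agree with the paper's computation, and your derivation of $\ord_\ell f(\pm\ell^k;j)\ge k$ for $\ell\nmid j$ via $(1-q^m)^{\ell^k}\equiv(1-q^{\ell m})^{\ell^{k-1}}\pmod{\ell^k}$ is a legitimate alternative to the paper's Corollary \ref{cor: D'Arcais}.

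The one genuine gap is the part you yourself flag: the base congruence $(\star)$ for all $m\ge 1$. The paper does not prove this; it quotes it from Garvan, Kim, and Stanton \cite{GarvanKimStanton}, where the $\ell$-core congruences $p_\ell(0;\ell^m n-(\ell^2-1)/24)\equiv 0\pmod{\ell^m}$ for $\ell=5,7,11$ are established. Your proposed in-house proof is not complete as sketched. For $\ell=5$ the Eisenstein/multiplicativity argument works, but for $\ell=7,11$ the step ``$|a_h(\ell)|=\ell^{(k-1)/2}$, hence divisible by $\ell$'' does not follow: the archimedean bound $a_h(\ell)\overline{a_h(\ell)}=\ell^{k-1}$ only forces the prime divisors of $a_h(\ell)$ to lie above $\ell$, not that $\ord_\lambda a_h(\ell)$ is large at every prime $\lambda\mid\ell$ of the coefficient field (ordinary eigenforms with slope $0$ are not excluded by this argument alone), and you would additionally have to control the $\ell$-integrality of the eigenform decomposition of $\eta(\ell\tau)^\ell/\eta(\tau)$, which for $\ell=11$ (weight $5$, a several-dimensional space) is not automatic. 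The fix is simply to cite \cite{GarvanKimStanton} for $(\star)$, after which your case analysis completes the proof exactly as the paper does.
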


\begin{example}
    We provide examples of each congruence given in Theorem \ref{thm:congruences}.
    \begin{enumerate}
        \item Let $\ell = 11$, $a=3$, and $m=2$. We have that $$p_{11}(3;121n + 28) \equiv 0 \pmod{11^3}.$$
        \item Let $\ell = a = 7$, and $m = 3$. We have that $$p_7(7; 343n + 47) \equiv 0 \pmod{7^3}.$$
        \item Let $\ell = 5$ and $a = 9$. We have that $$p_5(9;125n + 44) \equiv 0 \pmod{5^4}.$$
        \item Let $\ell = 5$, $a = 10$, and $m=3$. We have that $$p_5(10; 125n + 49) \equiv 0 \pmod{5^3}. $$
        \item Let $\ell = 7$ and $a = 45$. We have that $$p_7(45; 343n + 313) \equiv 0 \pmod{7^3}$$
    \end{enumerate}
\end{example}

The key difference in our result is that there is a shift in the arithmetic progression which depends on $a$. Note also that $\delta(\ell) = \delta(0;\ell)$ and that, for $0 < a < \ell$, Ono's congruences modulo $\ell^2$ occur in the first set of congruences in the $m=1$ progression where $n = k \ell -a$ for positive integers $k$. To obtain this result, we utilize Theorem \ref{thm:general pl} and the Ramanujan-type congruences for $p_\ell(0;n)$ when $\ell = 5,7,$ or $11$ modulo arbitrary powers of $\ell$ which are described in \cite{GarvanKimStanton}.

This paper is organized as follows. In Section 2, we state a theorem of Nakamura, which gives an infinite product expansion for the generating function of $p_\ell(a;n)$. To help us make use of Nakamura's product, we recall the formula for the D'Arcais polynomials from \cite{heimNeu}. In order to illuminate certain divisibility properties of special values of these polynomials, we require an alternative description that is also given in this section. We then review necessary background on the theory of modular forms in order to apply Serre's results in \cite{Serre} to the generating function of $p_\ell(0;n)$. In Section 3, we use Nakamura's product from Section \ref{sec: Nakamura} to prove Theorem \ref{thm:general pl} and Corollary \ref{thm:pl polys}. We combine these results with formulas for the $2$- and $3$-core partition functions (i.e. (\ref{2core}) and (\ref{3core})) to prove Theorem \ref{thm: 2-3 formulas} and Corollary \ref{thm:density23}. We then prove Theorem \ref{thm:density} using the result of Serre from Section \ref{sec:modular forms}. Finally, we use the properties of D'Arcais polynomials from Section \ref{sec: D'Arcais} as well as results on $\ell$-core congruences \cite{GarvanKimStanton} to prove Theorem \ref{thm:congruences}.

\section*{Acknowledgements}
The authors are deeply grateful to Ken Ono for advising this project and to Eleanor McSpirit for her many valuable comments and suggestions. The authors also thank Hasan Saad, William Craig, and the anonymous referee who reviewed our paper for their helpful comments. The authors were participants in the 2022 UVA REU in Number Theory. They are grateful for the support of grants from the National Science Foundation 
(DMS-2002265, DMS-2055118, DMS-2147273), the National Security Agency (H98230-22-1-0020), and the Templeton World Charity Foundation.

\section{~Nuts and Bolts~}\label{nuts and bolts}

In this section, we first recall a result of Nakamura on the McKay numbers for $S_n$, which we will use for the proof of Theorem \ref{thm:general pl}. We then recall important facts about D'Arcais polynomials and prove a divisibility property of the polynomials that we will use to prove Theorem $\ref{thm:congruences}$. In the final part of the section, we state standard results on the modularity of eta-quotients, which are essential to the proof of Theorem \ref{thm:density}.

\subsection{Nakamura's Generating Function}\label{sec: Nakamura}
In \cite{Nakamura}, Nakamura provides generating functions for McKay numbers and Macdonald numbers for a set of finite groups. In particular, he proves an infinite product expansion for the generating function of the $p_\ell(a;n)$, which correspond to McKay numbers for symmetric groups. We define \begin{equation}F_\ell(x;q) := \sum_{a = 0}^\infty P_\ell(a;q)x^a = \sum_{n=0}^\infty \sum_{a=0}^\infty p_\ell(a;n)x^aq^n.\end{equation}
We then have the following:

\begin{theorem}[Theorem 3.5, \cite{Nakamura}]\label{Nakamura}
Let $[k]_\ell := (\ell^k-1)/(\ell-1)$. If $\ell$ is prime, then 
$$F_\ell(x;q) = \prod_{k=0}^\infty \prod_{n=1}^\infty \frac{(1-x^{\ell n[k]_\ell}q^{\ell^{k+1}n})^{\ell^{k+1}}}{(1-x^{n[k]_\ell}q^{\ell^kn})^{\ell^k}}.$$
\end{theorem}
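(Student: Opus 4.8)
The plan is to prove Nakamura's formula through the classical $\ell$-core/$\ell$-quotient bijection, which converts the identity into a functional equation for $F_\ell(x;q)$ that I can then iterate into the stated infinite product. First I would recall the combinatorial dictionary: every partition $\lambda$ decomposes uniquely into its $\ell$-core $\mu$ (obtained by successively removing rim $\ell$-hooks, independent of order by Nakayama) together with its $\ell$-quotient $(\lambda^{(0)},\dots,\lambda^{(\ell-1)})$, and this bijection satisfies the size relation $|\lambda| = |\mu| + \ell\sum_{i=0}^{\ell-1}|\lambda^{(i)}|$ \cite{JamesKerber}. The technical heart of the argument, which I expect to be the main obstacle, is the behavior of hook lengths under this bijection: the multiset of those hook lengths of $\lambda$ divisible by $\ell$, after division by $\ell$, coincides with the disjoint union of the full hook-length multisets of $\lambda^{(0)},\dots,\lambda^{(\ell-1)}$ (standard via the abacus/first-column description). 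Since hook lengths not divisible by $\ell$ contribute nothing to $\ord_\ell(H(\lambda))$, and there are exactly $\sum_i|\lambda^{(i)}|$ hooks divisible by $\ell$, this yields the recursion
$$\ord_\ell(H(\lambda)) = \sum_{i=0}^{\ell-1}\Bigl(|\lambda^{(i)}| + \ord_\ell\bigl(H(\lambda^{(i)})\bigr)\Bigr).$$

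Next I would translate these two relations into a functional equation. Grouping the defining sum $F_\ell(x;q)=\sum_\lambda x^{\ord_\ell(H(\lambda))}q^{|\lambda|}$ according to the decomposition and applying the two displayed identities factors it as
$$F_\ell(x;q) = \Bigl(\sum_{\mu\ \ell\text{-core}} q^{|\mu|}\Bigr)\cdot\Bigl(\sum_{\nu} x^{|\nu|+\ord_\ell(H(\nu))}q^{\ell|\nu|}\Bigr)^{\ell}.$$
Writing $c_\ell(q):=\sum_{\mu\ \ell\text{-core}}q^{|\mu|}$ and noting that $\sum_\nu x^{|\nu|+\ord_\ell(H(\nu))}q^{\ell|\nu|}=\sum_\nu x^{\ord_\ell(H(\nu))}(xq^\ell)^{|\nu|}=F_\ell(x;xq^\ell)$, this reads exactly as the functional equation
$$F_\ell(x;q) = c_\ell(q)\cdot F_\ell\bigl(x;xq^\ell\bigr)^{\ell}.$$

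Then I would iterate. Replacing the $q$-slot repeatedly by $xq^\ell$, the argument after $k$ steps becomes $x^{e_k}q^{\ell^k}$, where $e_{k+1}=1+\ell e_k$ and $e_0=0$, so that $e_k=[k]_\ell$; the corresponding core factor accumulates to the power $\ell^k$. Because $F_\ell(x;x^{[K]_\ell}q^{\ell^K})$ differs from $1$ only in $q$-degrees at least $\ell^K$, raising it to the power $\ell^K$ preserves this property, so the tail tends to $1$ in the $q$-adic topology and the iteration converges to $F_\ell(x;q)=\prod_{k\ge0}c_\ell\bigl(x^{[k]_\ell}q^{\ell^k}\bigr)^{\ell^k}$. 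Finally I would insert the classical $\ell$-core generating function $c_\ell(q)=\prod_{n\ge1}(1-q^{\ell n})^\ell/(1-q^n)$, which is itself immediate from the same functional equation at $x=1$: there it reads $\prod_n(1-q^n)^{-1}=c_\ell(q)\prod_n(1-q^{\ell n})^{-\ell}$. Substituting $q\mapsto x^{[k]_\ell}q^{\ell^k}$ into each factor and collecting exponents turns $(1-q^{\ell n})^\ell/(1-q^n)$, raised to $\ell^k$, precisely into $(1-x^{\ell n[k]_\ell}q^{\ell^{k+1}n})^{\ell^{k+1}}/(1-x^{n[k]_\ell}q^{\ell^kn})^{\ell^k}$, recovering the stated formula. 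The only bookkeeping beyond the hook-length dictionary is checking that all manipulations are legitimate in $\mathbb{Z}[x][[q]]$; this holds since each coefficient of $q^n$ in $F_\ell$ is a polynomial in $x$ (as $\ord_\ell(H(\lambda))$ is bounded for fixed $|\lambda|$) and each factor in the product deviates from $1$ only in $q$-degree at least $\ell^k n$, so only finitely many factors affect any fixed coefficient.
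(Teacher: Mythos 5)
Your argument is correct and complete, but note that the paper does not prove this statement at all: it is quoted verbatim as Theorem 3.5 of Nakamura's paper \cite{Nakamura} and used as a black box, so there is no internal proof to compare against. What you have written is essentially the standard derivation (and, as far as the structure goes, the one underlying Nakamura's own proof): the $\ell$-core/$\ell$-quotient bijection with the size relation $|\lambda|=|\mu|+\ell\sum_i|\lambda^{(i)}|$, the hook-length dictionary identifying $\{h/\ell : h\in\mathcal{H}(\lambda),\ \ell\mid h\}$ with $\bigsqcup_i\mathcal{H}(\lambda^{(i)})$ (this is the one nontrivial input; it is Theorem 2.7.40 of \cite{JamesKerber}, via the abacus), the resulting recursion $\ord_\ell(H(\lambda))=\sum_i\bigl(|\lambda^{(i)}|+\ord_\ell(H(\lambda^{(i)}))\bigr)$, and the functional equation $F_\ell(x;q)=c_\ell(q)\,F_\ell(x;xq^\ell)^\ell$ iterated $q$-adically. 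Your bookkeeping checks out: the exponent recursion $e_{k+1}=1+\ell e_k$ with $e_0=0$ gives $e_k=[k]_\ell$, the tail $F_\ell(x;x^{[K]_\ell}q^{\ell^K})^{\ell^K}=1+O(q^{\ell^K})$ justifies convergence, and substituting the $\ell$-core generating function $c_\ell(q)=\prod_n(1-q^{\ell n})^\ell/(1-q^n)$ (which you correctly extract from the $x=1$ specialization) reproduces the stated factors, including the $k=0$ factor since $[0]_\ell=0$. The one place to be slightly more careful in a written version is to state explicitly that the hook-length multiset identity is being invoked as a known theorem rather than proved; everything else is routine manipulation in $\mathbb{Z}[x][[q]]$, which you have justified.
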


We will use this generating function to prove the closed formulas for $p_\ell(a;n)$ in Section \ref{sec: proofs}.

\subsection{D'Arcais Polynomials}\label{sec: D'Arcais}

In their study of the Seiberg-Witten theory in \cite{NO}, Nekrasov and Okounkov prove a closed formula for D'Arcais polynomials in terms of partition hook lengths. To set notation, let $\mathcal{P}$ be the set of all partitions and let $\mathcal{H}(\lambda)$ be the multiset of hook lengths of a partition $\lambda$. Then for any complex number $z$, we have (Formula 6.12 of \cite{NO})
    $$\prod_{m=1}^\infty (1-q^m)^{z-1}=\sum_{\lambda \in \mathcal{P}}q^{|\lambda|} \prod_{h \in \mathcal{H}(\lambda)}\left(1-\frac{z}{h^2}\right).$$

\noindent We note that this is equivalent to following formula for the D'Arcais polynomial $f(r,n)$ for any nonnegative integer $n$ and complex number $r$:
    \begin{equation}\label{eqn:NO D'Arcais} f(r,n)=\sum_{\lambda \vdash n} \prod_{h \in \mathcal{H}(\lambda)}\left(1-\frac{r+1}{h^2}\right).\end{equation}
Since a partition of $n$ has exactly $n$ hooks, we have that $r$ appears to degree $n$ in each term of the sum. Therefore, $f(r,n)$ is a polynomial in $r$ of degree at most $n$.

In order to prove Theorem \ref{thm:congruences}, we will need other properties of the D'Arcais polynomials that are not immediately apparent from (\ref{eqn:NO D'Arcais}).
Therefore, we prove an alternative closed formula for the D'Arcais polynomials. To this end, define $D_\lambda$ to be the set of distinct parts of $\lambda$ and let $l(\lambda)$ be the total number of parts of $\lambda$. For $\lambda_i \in D_\lambda$, we write $m(\lambda_i)$ to denote the multiplicity of $\lambda_i$ in $\lambda$. We then have the following theorem.

\begin{theorem}\label{thm:D'Arcais}
If $r$ is a complex number and $n$ is a nonnegative integer, then
    $$f(r;n) = \sum_{\lambda \vdash n}(-1)^{l(\lambda)} \prod_{\lambda_j \in D_\lambda} \binom{r}{m(\lambda_j)},$$
where
$$\binom{r}{m(\lambda_j)} := \frac{r(r-1) \ldots (r-m(\lambda_j)+1)}{m(\lambda_j)!}.$$
\end{theorem}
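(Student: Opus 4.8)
The plan is to prove the identity
$$\prod_{m=1}^\infty (1-q^m)^r = \sum_{\lambda \vdash n} (-1)^{l(\lambda)} \prod_{\lambda_j \in D_\lambda} \binom{r}{m(\lambda_j)} \cdot q^n$$
by direct manipulation of the generating function, extracting the coefficient of $q^n$ combinatorially. First I would rewrite the product using the binomial theorem on each factor: for each $m \ge 1$,
$$(1-q^m)^r = \sum_{k=0}^\infty \binom{r}{k}(-1)^k q^{mk}.$$
Multiplying these expansions together over all $m$, the coefficient of $q^n$ is obtained by choosing, for each part-size $m$, an exponent $k_m \ge 0$ such that $\sum_m m\, k_m = n$. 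Each such choice corresponds exactly to a partition $\lambda$ of $n$ in which the part $m$ appears with multiplicity $k_m = m(\lambda_j)$ (so the distinct parts are those $m$ with $k_m > 0$). The sign $(-1)^{\sum_m k_m} = (-1)^{l(\lambda)}$ since $l(\lambda) = \sum_m k_m$ is the total number of parts, and the product of binomial coefficients becomes $\prod_{\lambda_j \in D_\lambda}\binom{r}{m(\lambda_j)}$. This immediately yields the claimed formula once I identify the left-hand side as the generating function defining $f(r;n)$.

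The key steps, in order, are as follows. First, invoke the generalized binomial series to expand each factor $(1-q^m)^r$; this is valid as a formal power series in $q$ with coefficients that are polynomials in $r$ (the complex exponent $r$ enters only through the binomial coefficients $\binom{r}{k}$, which are genuine polynomials in $r$, so no analytic convergence issue arises — I am working in the ring of formal power series $\mathbb{C}[r][[q]]$). Second, form the Cauchy product over $m$ and observe that only finitely many factors contribute to each fixed power of $q$, so the coefficient extraction is well defined. Third, set up the bijection between sequences $(k_m)_{m\ge 1}$ of nonnegative integers with $\sum_m m\, k_m = n$ and partitions $\lambda \vdash n$, under which $k_m = m(\lambda_j)$ for the part $\lambda_j = m$. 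Fourth, translate the sign and the product of binomials through this bijection, using $l(\lambda) = \sum_{\lambda_j \in D_\lambda} m(\lambda_j)$.

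I do not expect a genuine obstacle here; the proof is essentially a bookkeeping argument. The only point requiring a small amount of care is making precise that the infinite product of power series is well defined and that the Cauchy-product coefficient of $q^n$ truly reduces to a finite sum indexed by partitions of $n$ — this follows because any factor $(1-q^m)^r$ with $m > n$ contributes $1$ to all terms up through $q^n$, so only the factors with $m \le n$ matter, and among these only finitely many exponent tuples satisfy $\sum_m m\, k_m = n$. I would state this finiteness explicitly to justify working in $\mathbb{C}[r][[q]]$ rather than appealing to convergence, since $r$ is an arbitrary complex number. With that remark in place, matching coefficients of $q^n$ on both sides against the defining generating function $\prod_{m=1}^\infty(1-q^m)^r = \sum_{n\ge 0} f(r;n)q^n$ completes the proof.
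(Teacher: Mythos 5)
Your proposal is correct and follows essentially the same route as the paper's proof: expand each factor $(1-q^m)^r$ by the generalized binomial theorem, take the Cauchy product, and identify the exponent tuples $(k_m)$ with $\sum_m m\,k_m = n$ with partitions of $n$, translating the sign into $(-1)^{l(\lambda)}$ and the binomial product into $\prod_{\lambda_j\in D_\lambda}\binom{r}{m(\lambda_j)}$. Your added remarks on working formally in $\mathbb{C}[r][[q]]$ and on the finiteness of the coefficient extraction are sound and, if anything, slightly more careful than the paper's write-up.
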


\begin{proof}
By the binomial theorem, we have
\begin{align*}
    \prod_{n=1}^\infty(1-q^n)^r &= \prod_{n=1}^\infty \sum_{k=0}^\infty \binom{r}{k}(-q^n)^k.
\end{align*}
Let $\mathcal{Z} = \{(z_0, z_1,...) \in \bigoplus_{i\in \mathbb{N}}\mathbb{Z}_{\ge 0}\}$. Then expanding the above product we have
\begin{align*}
    \prod_{n=1}^\infty(1-q^n)^r &= \sum_{\vec{z}\in \mathcal{Z}} \prod_{j=1}^\infty \binom{r}{z_j} (-q^j)^{z_j} \\
    &= \sum_{\vec{z} \in \mathcal{Z}} (-1)^{\sum_{j=0}^\infty z_j}q^{\sum_{j=0}^\infty j z_j} \prod_{j=1}^\infty \binom{r}{z_j}.
\end{align*}
We can view $\mathcal{Z}$ as the set of partitions $\lambda$ with $z_j$ parts of size $j$. For a given $\vec{z}$ equivalent to partition $\lambda$ of $a$, we have 
\begin{align*}
    &\sum_{j=1}^\infty z_j = l(\lambda), \qquad \sum_{j = 1}^\infty j z_j = a, \qquad \prod_{j=1}^\infty \binom{r}{z_j} = \prod_{\lambda_j \in D_\lambda} \binom{r}{m(\lambda_j)}.
\end{align*}
Therefore, we find that
\begin{align*}
    \prod_{n=1}^\infty(1-q^n)^r &= \sum_{a=0}^\infty \sum_{\lambda \vdash a} (-1)^{l(\lambda)} q^a \prod_{\lambda_j \in D_\lambda} \binom{r}{m(\lambda_j)}\\
    &=\sum_{a=0}^\infty q^a \sum_{\lambda \vdash a} (-1)^{l(\lambda)} \prod_{\lambda_j \in D_\lambda} \binom{r}{m(\lambda_j)}.
\end{align*}
\end{proof}

From this formula, we note that $f(r;n)$ is a polynomial in $r$ of degree exactly $n$, since the degree of each term in the sum is determined by $l(\lambda)$ and there is exactly one partition of $n$ with $n$ parts. We also use this formula to show the following property of the polynomials. 

\begin{corollary} \label{cor: D'Arcais}
If $\ell$ is prime, $n$ and $j$ are positive integers, and $\ord_\ell(n) = b < j$, then 

$$f(-\ell^j; n) \equiv 0 \pmod{\ell^{j-b}}.$$
\end{corollary}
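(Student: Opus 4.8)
The plan is to work directly from the alternative formula for $f(r;n)$ established in Theorem~\ref{thm:D'Arcais}. Setting $r=-\ell^j$ gives
$$f(-\ell^j;n)=\sum_{\lambda\vdash n}(-1)^{l(\lambda)}\prod_{\lambda_i\in D_\lambda}\binom{-\ell^j}{m(\lambda_i)},$$
so it suffices to show that \emph{every} summand has $\ell$-adic valuation at least $j-b$; since the valuation of a sum is at least the minimum of the valuations of its terms, the claimed congruence follows. Thus the whole problem reduces to understanding $\ord_\ell\binom{-\ell^j}{m}$ for a single multiplicity $m$.

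The key step is the lemma that, for every positive integer $m$,
$$\ord_\ell\binom{-\ell^j}{m}\ge j-\ord_\ell(m).$$
To prove this I would rewrite the generalized binomial coefficient via the standard identity $\binom{-\ell^j}{m}=(-1)^m\binom{\ell^j+m-1}{m}$ and then apply Kummer's theorem: $\ord_\ell\binom{\ell^j+m-1}{m}$ equals the number of carries when adding $m$ and $\ell^j-1$ in base $\ell$. Since $\ell^j-1$ has digit $\ell-1$ in each of the positions $0,1,\dots,j-1$, writing $c:=\ord_\ell(m)$ one checks that (when $c<j$) the first carry is produced at position $c$ and then propagates through every position up to $j-1$, giving at least $j-c$ carries; the bound is vacuous when $c\ge j$. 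This digit bookkeeping is the main technical point, but it is entirely elementary.

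Finally I would assemble the per-term estimate. Fix $\lambda\vdash n$ and let $c_{\min}:=\min_{\lambda_i\in D_\lambda}\ord_\ell(m(\lambda_i))$. Because $n=\sum_{\lambda_i\in D_\lambda}\lambda_i\,m(\lambda_i)$ and each summand is divisible by $\ell^{c_{\min}}$, we have $b=\ord_\ell(n)\ge c_{\min}$. Applying the lemma to the distinct part achieving $c_{\min}$ and using that all other factors have nonnegative valuation yields
$$\ord_\ell\!\left(\prod_{\lambda_i\in D_\lambda}\binom{-\ell^j}{m(\lambda_i)}\right)\ge j-c_{\min}\ge j-b.$$
The one subtlety worth flagging is that the naive term-by-term bound $\sum_{\lambda_i}\bigl(j-\ord_\ell(m(\lambda_i))\bigr)$ can be small, or even negative, when several multiplicities are highly divisible by $\ell$; the observation $c_{\min}\le b$ is exactly what rescues the estimate, since a part whose multiplicity is divisible by a large power of $\ell$ forces $n$ itself to be divisible by that power, which would contradict $b<j$. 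With every summand having valuation at least $j-b$, the corollary follows at once.
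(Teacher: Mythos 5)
Your proof is correct and follows essentially the same route as the paper: both reduce to bounding each summand of the formula in Theorem~\ref{thm:D'Arcais}, both use the observation that some distinct part must have multiplicity with $\ell$-adic valuation at most $b$ (since otherwise $\ell^{b+1}\mid n$), and both rest on the estimate $\ord_\ell\binom{-\ell^j}{m}\ge j-\ord_\ell(m)$. The only difference is that you prove that estimate via Kummer's carry-counting theorem, whereas the paper obtains it by directly manipulating $\binom{-\ell^j}{m}=\pm(\ell^j+m-1)!/\bigl((\ell^j-1)!\,m!\bigr)$ to isolate the factor $\ell^j/(\ell^j+m)$; both arguments are valid and elementary.
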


\begin{proof}
    Fix a partition $\lambda \vdash n$. Since $\ord_\ell(n) = b$, there exists $\lambda_k \in D_\lambda$ such that $\ord_\ell(m(\lambda_k)) \le b$. We have that
    \begin{align*}
        \binom{-\ell^j}{m(\lambda_k)} &= \pm\frac{(\ell^j+m(\lambda_k)-1)!}{(\ell^j-1)! m(\lambda_k)!}\\
        &=\pm \frac{(m(\lambda_k)+\ell^j)(m(\lambda_k+\ell^j-1))\cdot \ldots \cdot (\ell^j+1) \cdot \ell^j}{(m(\lambda_k)+\ell^j) \cdot m(\lambda_k)!}.
    \end{align*}
    Note that $m(\lambda_k)!$ divides $(m(\lambda_k)+\ell^j)(m(\lambda_k+\ell^j-1))\cdot \ldots \cdot (\ell^j+1)$ and $\ord_\ell(m(\lambda_k)+\ell^j)\le b$. Therefore ${\binom{-\ell^j}{m(\lambda_k)}  \equiv 0 \pmod{\ell^{j-b}}}$. Since $\binom{-\ell^j}{m(\lambda_i)}$ is an integer for any $\lambda_i \in D_\lambda$, we then have that have that $\prod_{\lambda_j \in D_\lambda}\binom{-\ell^j}{\lambda_j} \equiv 0 \pmod{\ell^{j-b}}$. The result follows by Theorem \ref{thm:D'Arcais}.
\end{proof}

This result will be key in establishing the modulus of the congruences in Theorem \ref{thm:congruences}.

\subsection{Modular Forms}\label{sec:modular forms} 

In this section, we state some important facts from the theory of modular forms which we will use to prove Theorem \ref{thm:density}. For a detailed discussion of these topics, see Chapter 1 of \cite{CBMS}.

First recall that $\SL_2(\mathbb{Z})$ acts on the upper half plane $\mathcal{H}$ by linear fractional transformations. In particular, we write $$\gamma z := \frac{az + b}{cz+d}; \hspace{.5cm} \gamma = \left(\begin{matrix}
        a & b \\
        c & d
        \end{matrix}\right) \in \SL_2(\mathbb{Z}). $$

A \emph{weakly holomorphic modular form} of weight $k \in \mathbb{Z}$ and Nebentypus character $\chi$ on a congruence subgroup $\Gamma$ of $\SL_2(\mathbb{Z})$ is a function that is holomorphic on the upper half plane $\mathcal{H}$, whose poles, if any, are supported on the cusps of $\Gamma$, and which satisfies the corresponding \emph{modular transformation law}: for $\gamma = \left(\begin{matrix}
        a & b \\
        c & d
        \end{matrix}\right) \in \Gamma$ and $z \in \mathcal{H}$, we have $$f(\gamma z) = \chi(d)(cz +d)^k f(z).$$
If $f$ satisfies the above conditions and is holomorphic at the cusps of $\Gamma$, we say $f$ is a \emph{holomorphic modular form} and write $f \in M_k(\Gamma, \chi)$. 

In this paper, we will focus on particular congruence subgroups of $\SL_2(\mathbb{Z})$. Namely, for an integer $N > 0$, we define $$ \Gamma_0(N) := \left\{\left(\begin{matrix}
        a & b \\
        c & d
        \end{matrix}\right) \in \SL_2(\mathbb{Z}) : c\equiv 0 \pmod{N} \right\}. $$

The primary function that we will make use of is Dedekind's eta-function $\eta(z)$, which is given by the infinite product 

 $$ \eta(z) := q^\frac{1}{24} \prod_{n=1}^\infty (1 - q^n),$$
 where $q := e^{2\pi i z}$. It is well known that $\eta(z)$ is also a modular form, albeit of a slightly more general type than discussed above. In particular, $\eta(z)$ admits a modular transformation law characterized by the relations 
    \begin{align*}
        \eta(z + 1) = e^{\pi i/12}\cdot\eta(z), \\
        \eta(-1/z) = \sqrt{-i z} \cdot\eta(z).
    \end{align*}
It turns out that $\eta(z)$ is a modular form of weight $1/2$. Although $\eta(z)$ will be central to the proof of Theorem \ref{thm:density}, we will study products of powers of $\eta(z)$ which result in integer weight modular forms. For the reader interested in half integer weight modular forms more generally, see \cite{CBMS}.

A function $f(z)$ is called an \emph{eta-quotient} if it is of the form $$f(z) = \prod_{\delta | N} \eta(\delta z)^{r_\delta}$$
where $N$ is a positive integer and $r_\delta \in \mathbb{Z}$ for each $\delta$.
In the proof of Theorem \ref{thm:density}, we will construct an eta-quotient whose Fourier expansion is the generating function of $p_\ell(0;n)$. We can then apply properties of eta-quotients to the generating function of $p_\ell(0;n)$. In particular, we recall the following theorem, which we will use to show that the eta-quotient we construct is an integer weight modular form. 

\begin{theorem}[p. 18 of \cite{CBMS}]\label{thm: mod check}
    If $f(z) = \prod_{\delta | N} \eta(\delta z)^{r_\delta}$ is an eta-quotient with $k = \frac{1}{2}\sum_{\delta | N} r_\delta \in \mathbb{Z}$, with the additional properties that 
    $$\sum_{\delta | N} \delta r_\delta \equiv 0 \pmod{24} $$
    and
    $$\sum_{\delta | N} \frac{N}{\delta}r_\delta \equiv 0 \pmod{24}, $$
    then $f(z)$ satisfies $$f\left(\frac{az + b}{cz+d}\right) = \chi(d)(cz+d)^kf(z).$$
    for every $\left(\begin{matrix}
        a & b \\
        c & d
        \end{matrix}\right) \in \Gamma_0(N)$. Here the character $\chi$ is defined by $\chi(d) := \left(\frac{(-1)^ks}{d}\right)$ where $s := \prod_{\delta | N} \delta^{r_\delta}$.
\end{theorem}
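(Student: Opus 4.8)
The plan is to reduce the transformation behavior of $f$ to that of a single factor $\eta(\delta z)$ and then to control the resulting multiplier system using the general transformation law of the Dedekind eta-function. The essential input is the formula valid for every $\gamma = \begin{pmatrix} a & b \\ c & d \end{pmatrix} \in \SL_2(\Z)$ with $c > 0$, namely
$$\eta(\gamma z) = \varepsilon(a,b,c,d)\,(cz+d)^{1/2}\,\eta(z),$$
where $\varepsilon(a,b,c,d)$ is an explicit $24$th root of unity expressible through a Dedekind sum (equivalently, for odd lower-left entry, through a Jacobi symbol times a power of $e^{\pi i/12}$). I would take this formula as known: it follows from the two relations $\eta(z+1) = e^{\pi i/12}\eta(z)$ and $\eta(-1/z) = \sqrt{-iz}\,\eta(z)$ stated in the excerpt, together with the fact that these two transformations generate $\SL_2(\Z)$.

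The first step is a conjugation trick. Fix $\gamma = \begin{pmatrix} a & b \\ c & d \end{pmatrix} \in \Gamma_0(N)$ and $\delta \mid N$. Since $N \mid c$ and $\delta \mid N$, we have $\delta \mid c$, so the matrix $\gamma_\delta := \begin{pmatrix} a & b\delta \\ c/\delta & d \end{pmatrix}$ lies in $\SL_2(\Z)$, and a direct computation gives $\delta\,(\gamma z) = \gamma_\delta\,(\delta z)$. Applying the eta transformation law to $\gamma_\delta$ acting on $\delta z$, and using that the lower-left entry of $\gamma_\delta$ is $c/\delta$ so that its automorphy factor is $(c/\delta)(\delta z)+d = cz+d$, yields
$$\eta(\delta\,\gamma z) = \varepsilon(\gamma_\delta)\,(cz+d)^{1/2}\,\eta(\delta z).$$
Raising this to the power $r_\delta$ and multiplying over all $\delta \mid N$, the powers of $(cz+d)^{1/2}$ combine to $(cz+d)^{k}$ with $k = \tfrac12\sum_{\delta\mid N} r_\delta \in \Z$, giving
$$f(\gamma z) = \left(\prod_{\delta \mid N}\varepsilon(\gamma_\delta)^{r_\delta}\right)(cz+d)^{k}\,f(z).$$
It remains only to identify the accumulated root of unity $\prod_{\delta\mid N}\varepsilon(\gamma_\delta)^{r_\delta}$ with the claimed character value $\chi(d) = \left(\frac{(-1)^k s}{d}\right)$.

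This last identification is the crux of the argument and the step I expect to be the main obstacle. Substituting the explicit formula for each $\varepsilon(\gamma_\delta)$ and applying Dedekind-sum reciprocity to rewrite the sums $s(\cdot,\,c/\delta)$ with small second argument, the non-integral contributions telescope, and the product reduces to a power of $e^{\pi i/12}$ together with a product of Jacobi symbols of the form $\left(\tfrac{\,\cdot\,}{c/\delta}\right)$. The exponent of the surviving exponential turns out to be an integral linear combination of the quantities $\sum_{\delta\mid N}\delta r_\delta$ and $\sum_{\delta\mid N}(N/\delta) r_\delta$, together with $\sum_{\delta\mid N} r_\delta = 2k$. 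The two hypotheses $\sum_{\delta\mid N}\delta r_\delta \equiv 0 \pmod{24}$ and $\sum_{\delta \mid N}(N/\delta) r_\delta \equiv 0 \pmod{24}$ are then exactly what is needed to push this exponent into $24\Z$, collapsing the exponential factor to the sign governed by $(-1)^k$.

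Finally, the remaining product of Jacobi symbols must be consolidated, via quadratic reciprocity and the definition $s = \prod_{\delta\mid N}\delta^{r_\delta}$, into the single Kronecker symbol $\left(\frac{(-1)^k s}{d}\right)$. Carrying out this bookkeeping carefully — tracking the case split in the eta-multiplier according to the parity of each lower-left entry $c/\delta$, handling the dependence on $\gcd(c,d)$, and verifying that the quadratic symbols recombine cleanly across the divisors $\delta$ — is the delicate part of the proof. The congruence conditions modulo $24$ are precisely the hypotheses that eliminate every obstruction arising from the transcendental part of the multiplier system and leave behind only the quadratic character $\chi$, thereby establishing the stated transformation law for all of $\Gamma_0(N)$.
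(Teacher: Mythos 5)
You should first note that the paper does not prove this statement at all: it is quoted verbatim from p.~18 of \cite{CBMS} (the result goes back to Gordon--Hughes--Newman and Ligozat), so there is no in-paper argument to compare yours against. Judged on its own terms, your strategy is the standard and correct one: the conjugation identity $\delta(\gamma z)=\gamma_\delta(\delta z)$ with $\gamma_\delta=\left(\begin{smallmatrix} a & b\delta \\ c/\delta & d\end{smallmatrix}\right)\in\SL_2(\Z)$ is exactly right (one checks $\det\gamma_\delta=1$ using $\delta\mid N\mid c$, and the automorphy factor $(c/\delta)(\delta z)+d=cz+d$ comes out as you say), and assembling the factors does reduce the theorem to evaluating $\prod_{\delta\mid N}\varepsilon(\gamma_\delta)^{r_\delta}$.

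The gap is that this evaluation --- which is the entire content of the theorem --- is described rather than performed. You write that the Dedekind-sum contributions ``telescope,'' that the surviving exponent ``turns out to be'' an integral combination of $\sum_\delta \delta r_\delta$, $\sum_\delta (N/\delta)r_\delta$, and $2k$, and that the Jacobi symbols ``recombine cleanly'' into $\left(\frac{(-1)^k s}{d}\right)$; none of this is verified, and it is precisely where every published proof of this result expends its effort (see Ligozat, or Newman's papers). Two further points need attention even in a complete write-up: (i) the multiplier formula you invoke holds for $c>0$, so the case $c=0$ (where $\gamma=\pm\left(\begin{smallmatrix}1&n\\0&1\end{smallmatrix}\right)$, and the condition $\sum_\delta \delta r_\delta\equiv 0\pmod{24}$ is exactly what gives invariance under $z\mapsto z+1$) and the reduction from $c<0$ must be handled separately; and (ii) when some $r_\delta$ is odd, the identity $\prod_\delta\bigl((cz+d)^{1/2}\bigr)^{r_\delta}=(cz+d)^k$ holds only up to a sign depending on the branch of the square root, and that sign has to be tracked into the multiplier computation. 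As it stands your proposal is a correct outline of the classical proof, not a proof.
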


The above theorem gives a necessary condition for an eta-quotient to satisfy a suitable modular transformation law. Since $\eta(z)$ is clearly nonvanishing on $\mathcal{H}$, in order to show that an eta-quotient is holomorphic on the congruence subgroup $\Gamma_0(N)$, it suffices to show that it is holomorphic at the cusps of  $\Gamma_0(N)$. To this end, we make use of the following theorem, which gives the order of vanishing of an eta-quotient $f(z)$ at the cusps of $\Gamma_0(N)$. 

\begin{theorem}[p. 18 of \cite{CBMS}]\label{thm:ord vanish}
    Let $c,d,$ and $N$ be positive integers with $d | N$ and $\gcd(c,d) = 1$. If $f(z)$ is an eta-quotient satisfying the conditions of Theorem \ref{thm: mod check} for $N$, then the order of vanishing of $f(z)$ at the cusp $\frac{c}{d}$ is
    $$\frac{N}{24}\sum_{\delta | N} \frac{\gcd(d, \delta)^2r_\delta}{\gcd(d, \frac{N}{d})d\delta}. $$
\end{theorem}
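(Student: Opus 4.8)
The plan is to reduce everything to the transformation behavior of the single function $\eta(z)$ under $\SL_2(\mathbb{Z})$ and then bookkeep the contribution of each factor $\eta(\delta z)^{r_\delta}$ at the cusp $c/d$. The starting point is the general transformation law: for any $M = \begin{pmatrix} a_0 & b_0 \\ c_0 & d_0\end{pmatrix} \in \SL_2(\mathbb{Z})$ one has $\eta(M z) = \varepsilon(M)(c_0 z + d_0)^{1/2}\eta(z)$, where $\varepsilon(M)$ is a $24$th root of unity. Since $\eta$ is nonvanishing on $\mathcal{H}$ and both $\varepsilon(M)$ and the automorphy factor are nonzero, all of the analytic information about vanishing is carried by the leading exponential of $\eta$ itself. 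Because $f$ satisfies the hypotheses of Theorem \ref{thm: mod check}, it is genuinely modular on $\Gamma_0(N)$ with a well-defined multiplier, so its order of vanishing at the cusp $c/d$ is a well-defined invariant that we compute by moving the cusp to $i\infty$ and reading off the lowest power of the local uniformizer.

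Concretely, I would fix $A = \begin{pmatrix} c & * \\ d & *\end{pmatrix} \in \SL_2(\mathbb{Z})$ so that $A(\infty) = c/d$ (possible since $\gcd(c,d)=1$), and study each $\eta(\delta A z)$ as $z \to i\infty$. The key algebraic step is to factor the determinant-$\delta$ matrix $\delta A$ as
\[
\delta A = M_\delta \begin{pmatrix} g & b' \\ 0 & d'\end{pmatrix}, \qquad M_\delta \in \SL_2(\mathbb{Z}),
\]
where $g = \gcd(\delta c, d) = \gcd(d,\delta)$ (using $\gcd(c,d)=1$) and $d' = \delta/g$; such a factorization exists because the first column $(\delta c, d)^{t}$ has content $g$, so $(\delta c/g,\, d/g)^{t}$ completes to a matrix in $\SL_2(\mathbb{Z})$, and the determinant forces $g d' = \delta$. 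Applying the $\eta$ transformation to $\eta(\delta A z) = \eta\bigl(M_\delta \tfrac{gz+b'}{d'}\bigr)$ and discarding the root of unity and the automorphy factor, the relevant exponential is $\eta\bigl(\tfrac{gz+b'}{d'}\bigr) \sim e^{2\pi i (gz+b')/(24 d')}$, whose leading power in $q = e^{2\pi i z}$ is $q^{g/(24 d')} = q^{\gcd(d,\delta)^2/(24\delta)}$.

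Multiplying over all $\delta \mid N$ with exponents $r_\delta$ gives a leading exponent in $q$ of $\nu = \tfrac{1}{24}\sum_{\delta\mid N} r_\delta\,\gcd(d,\delta)^2/\delta$. The final step converts this $q$-order into the order of vanishing with respect to the local parameter $q_h = e^{2\pi i z/h}$ at the cusp, where $h$ is the width of $c/d$ in $\Gamma_0(N)$; for $d \mid N$ this width is $h = N/\bigl(d\gcd(d,N/d)\bigr)$. Since $q = q_h^{\,h}$, the order of vanishing is $h\nu = \tfrac{N}{24}\sum_{\delta\mid N}\frac{\gcd(d,\delta)^2 r_\delta}{\gcd(d, N/d)\, d\, \delta}$, which is precisely the claimed formula. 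I expect the main obstacle to be the careful execution of the matrix factorization together with the tracking of the width normalization: one must verify that $g = \gcd(d,\delta)$ emerges correctly as the content of the first column, confirm that the multiplier and automorphy factors genuinely do not contribute to the vanishing order (which is where the congruence hypotheses of Theorem \ref{thm: mod check} are needed to ensure the invariant is well defined and independent of the choice of $A$), and pin down the cusp width $h$ exactly, since an off-by-a-factor error there would propagate directly into the denominator of the final expression.
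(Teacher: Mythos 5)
The paper does not prove this statement; it is quoted verbatim from p.~18 of \cite{CBMS} (Ligozat's theorem on orders of eta-quotients at cusps), so there is no in-paper argument to compare against. Your outline is the standard and correct derivation of that result: conjugating the cusp $c/d$ to $i\infty$ by $A=\left(\begin{smallmatrix} c & * \\ d & * \end{smallmatrix}\right)$, factoring the determinant-$\delta$ matrix $\left(\begin{smallmatrix} \delta & 0 \\ 0 & 1 \end{smallmatrix}\right)A$ as an $\SL_2(\mathbb{Z})$ matrix times an upper-triangular matrix $\left(\begin{smallmatrix} g & b' \\ 0 & \delta/g \end{smallmatrix}\right)$ with $g=\gcd(\delta c,d)=\gcd(\delta,d)$, reading off the exponent $\gcd(d,\delta)^2/(24\delta)$ from each factor, and rescaling by the cusp width $N/(d\gcd(d,N/d))$, all of which you state correctly (the multiplier and the polynomially growing automorphy factor indeed cannot affect the exponential order, so your flagged ``main obstacles'' are routine).
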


 In order to prove Theorem \ref{thm:density}, we will write $P_\ell(0;q)$ as an eta-quotient. We then employ Theorems \ref{thm: mod check} and \ref{thm:ord vanish} to prove that this eta-quotient is an integer weight homolomorphic modular form on the congruence subgroup  $\Gamma_0(\ell)$. Finally, we show that $p_\ell(0;n)$ is almost always divisible by $m$ for any integer $m$ by applying the following theorem, which is a generalization of a result of Serre about the $\mod m$ lacunarity of integer weight modular forms \cite{Serre}.

\begin{theorem}[p. 43 of \cite{CBMS}]\label{thm: mod exp dens}
    Suppose that $f(z)$ is an integer weight modular form in $M_k(\Gamma_0(N), \chi)$ and has Fourier expansion $$f(z) = \sum_{n=0}^\infty a(n)q^n, $$
    where $a(n) \in \mathbb{Z}$ for all $n$. Then there exists a constant $\alpha > 0$ such that $$\#\{n \le X: a(n) \ne 0 \pmod{m}\} = O\left(\frac{X}{(\log{X})^\alpha}\right). $$
\end{theorem}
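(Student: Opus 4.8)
The plan is to follow Serre's strategy, combining the multiplicativity of the Hecke operators with the Chebotarev density theorem and a sieve-theoretic count. The first step is to reduce to the case of a prime power modulus. Writing $m = \prod_i \ell_i^{j_i}$, an integer $n$ can only satisfy $a(n) \not\equiv 0 \pmod m$ if $a(n) \not\equiv 0 \pmod{\ell_i^{j_i}}$ for at least one $i$, so
\[
\{n \le X : a(n) \not\equiv 0 \pmod m\} \subseteq \bigcup_i \{n \le X : a(n) \not\equiv 0 \pmod{\ell_i^{j_i}}\}.
\]
A bound of the desired shape for each prime power, with its own exponent $\alpha_i$, then yields the claim for $m$ with $\alpha = \min_i \alpha_i$. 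Hence it suffices to treat $m = \ell^j$.

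Fix $\ell^j$. The goal is to produce a set of primes $P$ of positive density $\delta$ with the property that whenever some $p \in P$ \emph{exactly} divides $n$ (that is, $p \mid n$ but $p^2 \nmid n$), we have $a(n) \equiv 0 \pmod{\ell^j}$. I would extract this from the Hecke operators: for $p \nmid N\ell$ the operator $T_p$ acts on the $q$-expansion by
\[
(f \mid T_p) = \sum_{n} \bigl( a(pn) + \chi(p) p^{k-1} a(n/p)\bigr) q^n,
\]
so if $f \mid T_p \equiv 0 \pmod{\ell^j}$, then reading off the coefficient at any $n$ coprime to $p$ gives $a(pn) \equiv 0 \pmod{\ell^j}$. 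Thus $P := \{p \nmid N\ell : f \mid T_p \equiv 0 \pmod{\ell^j}\}$ has exactly the required exact-divisibility property. The existence of such a $P$ of positive density is the crux: the space of modular forms modulo $\ell^j$ on $\Gamma_0(N)$ of the relevant weight is a finite module on which the $T_p$ act through Frobenius classes of the attached Galois representations (Deligne, Deligne--Serre). The condition $f \mid T_p \equiv 0$ cuts out a nonempty, conjugation-stable subset of the image, so Chebotarev's density theorem gives $P$ a positive density $\delta > 0$.

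Finally, I would convert this into the stated count. By the previous step, if $a(n) \not\equiv 0 \pmod{\ell^j}$ then no prime of $P$ exactly divides $n$, equivalently the ``$P$-part'' of $n$ (its largest divisor composed of primes in $P$) must be powerful. Factoring $n = st$ with $s$ the $P$-part and $\gcd(t,P)=1$, the exceptional set is contained in
\[
\bigcup_{\substack{s \text{ powerful} \\ P\text{-number}}} \{\, t \le X/s : \gcd(t, P) = 1\,\}.
\]
A Wirsing/Landau-type estimate bounds $\#\{t \le Y : \gcd(t,P)=1\} \ll Y/(\log Y)^{\delta}$, and since $\sum_{s \text{ powerful}} 1/s$ converges (the dominant contribution coming from $s=1$), summing over $s$ yields the total bound $O\bigl(X/(\log X)^{\delta}\bigr)$. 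Taking $\alpha = \delta$ completes the prime-power case and hence the theorem.

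The main obstacle is the positive-density input in the middle step. For a single normalized Hecke eigenform this is a direct Chebotarev argument applied to the residual representation, using that trace-zero Frobenius elements occupy a positive proportion of the image; but $f$ need not be an eigenform, and for prime powers $\ell^j$ one must work with the Hecke action on the whole finite-dimensional space modulo $\ell^j$ rather than with a single eigenvalue. Decomposing that Hecke action and controlling it uniformly modulo a prime power is where the real difficulty lies, and is precisely the content of the theorem of Serre that we are invoking.
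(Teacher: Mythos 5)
The paper does not actually prove this statement: it is quoted verbatim from p.~43 of \cite{CBMS} as a known generalization of Serre's theorem on the mod $m$ behavior of integer weight modular forms, and is used as a black box in the proof of Theorem \ref{thm:density}. So there is no in-paper argument to compare yours against; the assessment below measures your proposal against Serre's actual proof, which is what the cited source records.

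Your outline reproduces that proof's architecture faithfully: the reduction to prime-power moduli, the observation that $f\mid T_p\equiv 0\pmod{\ell^j}$ forces $a(pn)\equiv 0\pmod{\ell^j}$ for $p\nmid n$, and the closing count of integers whose $P$-part is powerful (which does go through, provided you split the sum over powerful $s$ at $s\le\sqrt{X}$ and use $\sum_{s>Y,\ s\ \mathrm{powerful}}1/s\ll Y^{-1/2}$ to control the range where $\log(X/s)$ is small). The genuine gap is the middle step, and you have in effect conceded it: asserting that $\{p:f\mid T_p\equiv 0\pmod{\ell^j}\}$ is ``a nonempty, conjugation-stable subset'' and then deferring to ``the theorem of Serre that we are invoking'' is circular, since the statement under proof \emph{is} that theorem of Serre; nonemptiness is the entire mathematical content. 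To fill it one must (i) write $f$ as a linear combination of Hecke eigenforms plus an Eisenstein part, clearing denominators so as to work modulo $\ell^{j'}$ for a suitable $j'\ge j$; (ii) attach to each cuspidal constituent its Deligne or Deligne--Serre representation modulo $\ell^{j'}$ and form the product representation, which has finite image; and (iii) observe that complex conjugation has determinant $-1$ and squares to the identity, hence has trace $0$, so that Chebotarev applied to the class of complex conjugation (intersected with the congruence $p\equiv -1\pmod{N\ell^{j'}}$, which also kills the Eisenstein eigenvalues $\chi_1(p)+\chi_2(p)p^{k-1}$) yields a positive density of primes with $a_i(p)\equiv 0$ for every constituent simultaneously. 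Without the trace-zero element of step (iii) there is no reason the set to which you apply Chebotarev is nonempty, and that is where the real work lives.
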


In the case of Theorem \ref{thm:density}, the Fourier coefficients $a(n)$ will be the $p_\ell(0;n)$. Therefore, Theorem \ref{thm: mod exp dens} bounds the number of $p_\ell(0;n)$, with $n<X$, that are not divisible by an integer $m$. When we combine this with Theorem \ref{thm:general pl}, we will obtain Theorem \ref{thm:density}.

\section{Proofs of Theorems}\label{sec: proofs}

In this section we prove the results stated in Section 1.

\subsection{Proof of Theorem \ref{thm:general pl}}\label{proof general pl}
Separating the $k=0$ term from the product in Theorem \ref{Nakamura}, using the generating function for $\ell$-core partitions, and then writing Nakamura's infinite product as a product of D'Arcais polynomials, we have that
\begin{align*}
    F_\ell(x;q) &= \prod_{n=1}^{\infty} \frac{\left(1-q^{\ell n}\right)^\ell}{\left(1-q^n\right)} \cdot
    \prod_{k=1}^\infty \prod_{n=1}^\infty \frac{\left(1-x^{\ell n [k]_\ell}q^{\ell^{k+1} n}\right)^{\ell^{k+1}}}{\left(1-x^{n [k]_\ell}q^{\ell^{k} n}\right) ^{\ell^{k}}} \\
    &= P_\ell(0;q) \cdot  \prod_{k=1}^\infty \prod_{n=1}^\infty \frac{\left(1-x^{\ell n [k]_\ell}q^{\ell^{k+1} n}\right)^{\ell^{k+1}}}{\left(1-x^{n [k]_\ell}q^{\ell^{k} n}\right) ^{\ell^{k}}}\\
    &= P_\ell(0;q) \cdot \prod_{k=1}^\infty\left(\sum_{m=0}^\infty \left[ f(\ell^{k+1};m)x^{m\ell[k]_\ell}q^{m\ell^{k+1}}\right] \cdot \sum_{h=0}^\infty \left[f(-\ell^k;h) x^{h[k]_\ell}q^{h\ell^k}\right]\right) \\
    &= P_\ell(0;q) \cdot \prod_{k=1}^\infty\left( \sum_{h=0}^\infty \left[\sum_{m=0}^\infty f(\ell^{k+1};m)\cdot f(-\ell^k;h)x^{(m\ell+h)[k]_\ell}q^{(m\ell+h)\ell^{k}}\right]\right).
\end{align*}

\noindent Reindexing with $z =m\ell+h$, we then have that
\begin{align*}
    F_\ell(x;q)
    &= P_\ell(0;q) \cdot \prod_{k=1}^\infty\left( \sum_{z=0}^\infty \left[\sum_{m=0}^{\lfloor z /\ell \rfloor} f(\ell^{k+1};m)\cdot f(-\ell^k;z-m\ell)x^{z [k]_\ell}q^{z\ell^{k}}\right]\right)\\
    &= P_\ell(0;q) \cdot \prod_{k=1}^\infty\left( \sum_{z=0}^\infty \left[ x^{z [k]_\ell}q^{z\ell^{k}} \sum_{m=0}^{\lfloor z/\ell \rfloor} f(\ell^{k+1};m) \cdot f(-\ell^k;z-m\ell)\right]\right).
\end{align*}
Now, let $$\mathcal{Z}_\ell(a) := \bigg\{(z_1, z_2,...) \in \bigoplus_{i\in \mathbb{N}}\mathbb{Z}_{\ge 0} \; :\;\; \sum_{k\ge 1}z_k [k]_\ell = a \bigg\}.$$ Expanding the product in the above formula for $F_\ell(x;q)$, we have that 
\begin{align*}
    F_\ell(x;q)
    &= P_\ell(0;q) \cdot \sum_{a=0}^\infty \sum_{\vec{z} \in \mathcal{Z}_\ell(a)} \left(\prod_{k=1}^\infty x^{z_k [k]_\ell} q^{z_k \ell^k} \sum_{m=0}^{\lfloor z_k/\ell \rfloor}\left[f(\ell^{k+1};m)\cdot f(-\ell^k; z_k - m\ell)\right]\right) \\
    &= P_\ell(0;q) \cdot \sum_{a=0}^\infty x^a \sum_{\vec{z} \in \mathcal{Z}_\ell(a)} \left(\prod_{k=1}^\infty q^{z_k \ell^k} \sum_{m=0}^{\lfloor z_k/\ell \rfloor}\left[f(\ell^{k+1};m)\cdot f(-\ell^k; z_k - m\ell)\right]\right).
\end{align*}
The result follows. \qed

\subsection{Proof of Corollary \ref{thm:pl polys}}
Let $a<\ell$. Then for $k>1$, we have that $[k]_\ell>a$, and it follows that $\mathcal{Z}_\ell(a) = \left\{(a,0,0,...)\right\}$. Applying this to Theorem \ref{thm:general pl} (noting that $\lfloor a/\ell\rfloor = 0$) gives
\begin{align*}
    P_\ell(a;q) &= P_\ell(0;q) \cdot q^{a\ell}\cdot \left[f(\ell^{k+1};0)\cdot f(-\ell^k; a)\right] \cdot \left[\prod_{k=1}^\infty f(\ell^{k+1};0)\cdot f(-\ell^k;0) \right]\\
    &= P_\ell(0;q) \cdot q^{a\ell}\cdot f(-\ell^k; a).
\end{align*}
The second equality follows from the fact that $f(r;0)=1$ for any $r$. \qed

\subsection{Proof of Theorem \ref{thm: 2-3 formulas}}

From Corollary \ref{thm:pl polys}, we have that 
\begin{align*}
    P_2(1;q) &= 2 q^2 \cdot P_2(0; q), \\
    P_3(1;q) &= 3 q^3 \cdot P_3(0; q), \\
    P_3(2;q) &= 9 q^6 \cdot P_3(0; q).
\end{align*}
Using the identity for $P_2(0;q)$ and $P_3(0;q)$ from \cite{GranOno}, we then have that 

\begin{align*}
    P_2(1;n) &= 2 q^2 \cdot \sum_{m=0}^\infty q^{\frac{m^2+m}{2}}, \\
    P_3(1;n) &= 3 q^3 \cdot \sum_{m=0}^\infty  \left(\sum_{d | 3m+1}\left(\frac{d}{3}\right) \right) q^m, \\
    P_3(2;n) &= 9 q^6 \cdot \sum_{m=0}^\infty  \left(\sum_{d | 3m+1}\left(\frac{d}{3}\right) \right) q^m.
\end{align*}
Theorem \ref{thm: 2-3 formulas} follows after accounting for the shift in exponents of $q$.  \qed

\subsection{Proof of Corollary \ref{thm:density23}}

In the case that $\ell = 2$, we get from (\ref{2core}) that

\begin{equation}\label{2asymptotic}
    \#\{n \le X : p_2(0;n) \ne 0\} = \#\{k \in \mathbb{Z}^+: (k^2 + k)/2\le X\} = O(\sqrt{X}).
\end{equation}

\noindent If $\ell = 3$, we claim that
\begin{equation}\label{3asymptotic}
    \#\{n \le X : p_3(0;n) \ne 0\} = \#\bigg\{n \le X : \sum_{d\mid 3n+1} \left(\frac{d}{3}\right) \ne 0\bigg\} = O\left(\frac{X}{\sqrt{\log X}}\right).
\end{equation}
The equality above immediately follows from (\ref{3core}). To justify the $O$-bound, as noted on p. 334 of \cite{GranOno}, one employs elementary sieve theory. 

Turning to the case of general $a$, we define $g_2(X) := \sqrt{X}$ and $g_3(X) := \frac{X}{\sqrt{\log X}}$ so that we have ${\#\{n \le X: p_\ell(0;n) \ne 0\} = O(g_\ell(X))}$ for $\ell = 2,3$. For any $n < 0$, we let $p_\ell(0; n) = 0$. It then follows from Theorem \ref{thm:general pl} that
$$p_\ell(a;n) = \sum_{\vec{z}\in \mathcal{Z}_\ell(a)} p_\ell\Big(0;n-\sum_{k=1}^\infty z_k\ell^k\Big)\prod_{k=1}^\infty \sum_{m=1}^{\lfloor z_k/\ell\rfloor}f(\ell^{k+1};m)\cdot f(-\ell^k;z_k-m\ell),$$
where $\mathcal{Z}_\ell(a)$ is defined as in (\ref{Zdef}) and Section \ref{proof general pl}. For an arbitrary nonnegative integer $a$, $p_\ell(a;n) \ne 0$ only if $p_\ell(0; n - \sum_{k\ge 1}z_k \ell^k) \ne 0$ for at least one $\vec{z}\in \mathcal{Z}_\ell(a)$. Therefore, 
\begin{align*}
    \#\{n \le X : p_\ell(a;n) \ne 0\} 
    &\le \#\Bigg\{n \le X : \sum_{\vec{z}\in \mathcal{Z}_\ell(a)} p_\ell\Big(0;n- \sum_{k\geq1}z_k \ell^k\Big) \ne 0\Bigg\} \\ 
    &\le  \sum_{\vec{z}\in \mathcal{Z}_\ell(a)}\#\Big\{n \le X : p_\ell\Big(0;n- \sum_{k\geq1}z_k \ell^k\Big) \ne 0\Big\} \\ 
    &\le  \sum_{\vec{z}\in \mathcal{Z}_\ell(a)}\#\{n \le X : p_\ell(0;n) \ne 0\} \\     
    &= O_a(g_\ell(X)).
\end{align*}
The last line follows from (\ref{2asymptotic}, \ref{3asymptotic}) and the fact that $\mathcal{Z}_\ell(a)$ is finite. This proves Corollary \ref{thm:density23}. \qed

\subsection{Proof of Theorem \ref{thm:density}}

Let $\ell$ be an odd prime. We first note that the theorem is proven in the case when $\ell = 3$ by Corollary \ref{thm:density23}. Therefore, we may assume that $\ell \ge 5$. We begin by defining \begin{equation} f_\ell(z) := \frac{\eta^\ell(\ell z)}{\eta(z)}.\end{equation}

Taking $N = \ell$, by Theorem \ref{thm: mod check}, we get that $f_\ell(z)$ is a modular form of weight $\frac{\ell-1}{2}$ with Nebentypus character $\chi = \left( \frac{\bullet}{\ell} \right)$. Also by Theorem \ref{thm:ord vanish}, for $c,d$ positive integers such that $(c,d) = 1$ and $d | \ell$, the order of vanishing of $f_\ell (z)$ at a cusp $\frac{c}{d}$ is $$\frac{\ell}{24}\left(\frac{\ell(\gcd(d, \ell)-1)}{\gcd(d, \frac{\ell}{d})d\ell} \right).$$
Since $\gcd(d, \ell) \ge 1$, we get that the order of vanishing of $f_\ell(z)$ is nonnegative at any cusp $\frac{c}{d}$ of $\Gamma_0(\ell)$. Therefore, since $\frac{\ell-1}{2} \in \mathbb{Z}$ for odd primes $\ell$, we get that $f_\ell(z) \in M_{\frac{\ell-1}{2}}(\Gamma_0(\ell), \left( \frac{\bullet}{\ell} \right))$. 

We can also expand the eta quotient of $f_\ell(z)$ using the definition of $\eta(z)$ to get $$f_\ell(z) = q^{\frac{\ell^2-1}{24}} \prod_{n=1}^\infty \frac{(1-x^{\ell n})^\ell}{1-x^n}.$$
Because $\ell \ge 5$, we have that $\ell^2 \equiv 1 \pmod{24}$ and thus $\frac{\ell^2 -1}{24} \in \mathbb{Z}$.
Since $$\sum_{m = 0}^\infty p_\ell(0;n)q^n =  \prod_{n=1}^\infty \frac{(1-x^{\ell n})^\ell}{1-x^n},$$ we get that $f_\ell(z) = \sum_{n = 0}^\infty p_\ell(0;n)q^{\frac{\ell^2-1}{24}+n}$. Now we may apply Theorem \ref{thm: mod exp dens} to $f_\ell(z)$ to get that there exists a constant $\alpha_{\ell, m} > 0$ such that
\begin{equation}\label{eq: pl0 dens}
{\#\{n \le X: p_\ell(0;n) \ne 0 \pmod{m}\} = O\left(\frac{X}{(\log{X})^{\alpha_{\ell,m}}}\right)}.
\end{equation}
By the same argument as in the proof of Corollary \ref{thm:density23}, this implies for arbitrary $a$ that $${\#\{n \le X: p_\ell(a;n) \ne 0 \pmod{m}\} = O_a\left(\frac{X}{(\log{X})^{\alpha_{\ell,m}}}\right)}.$$
\qed

\subsection{Proof of Theorem \ref{thm:congruences}}
Recall from Section \ref{sec: D'Arcais} that if $\ell \nmid a$, then $f(-\ell, a) \equiv 0 \pmod{\ell}$ and $f(-\ell^2;a)\equiv 0 \pmod{\ell^2}$ . We also recall from \cite{GarvanKimStanton} that for any positive integer $n$
\begin{equation}\label{eq: l core cong}
    \begin{split}
        p_5(0;5^m n - (5^2-1)/24) &\equiv 0 \pmod{5^m}, \\
        p_7(0;7^m n - (7^2-1)/24) &\equiv 0 \pmod{7^m}, \\
        p_{11}(0;11^m n - (11^2-1)/24) &\equiv 0 \pmod{11^m}.
    \end{split}
\end{equation}

\noindent We now proceed by considering the five cases of Theorem \ref{thm:congruences} separately.

(1) By definition of $\delta(a,\ell)$ and Corollary \ref{thm:pl polys}, we have that, for $a<\ell$, 
\begin{align*}
    p_\ell(a; \ell^m n -\delta(a, \ell)) &= f(-\ell; a) \cdot p_\ell(0; \ell^m n - (\ell^2-1)/24).
\end{align*}
Part 1 of the Theorem follows from (\ref{eq: l core cong}) and the fact that $f(-\ell; a) \equiv 0 \pmod{\ell}$.

(2) By Theorem \ref{thm:general pl} we have that 
\begin{align*}
    p_\ell(\ell; \ell^m n -\delta(\ell, \ell)) 
    &= (f(-\ell; \ell)-\ell^2 ) \cdot p_\ell(0; \ell^m n - (\ell^2-1)/24).
\end{align*}
Part 2 of Theorem \ref{thm:congruences} follows from (\ref{eq: l core cong}).

(3) In the case that $\ell+1 < a < 2\ell$, from Theorem \ref{thm:general pl} we have that 
\begin{align*}
    p_\ell(a; \ell^3 n -\delta(a; \ell))
    =& \; (f(-\ell; a)-\ell^2 f(-\ell; a-\ell)) \cdot p_\ell(0; \ell^3 n - (\ell^2-1)/24) \\ &+ f(-\ell; a -\ell-1)\cdot \ell^2 \cdot p_\ell(0; \ell^3 n - (\ell^2-1)/24 + \ell).
\end{align*}
Note that by (\ref{eq: l core cong}) and the fact that $f(-\ell; a) \equiv f(-\ell;a-\ell)\equiv f(-\ell;a-\ell-1) \equiv 0 \pmod{\ell}$, both terms are divisible by $\ell^4$. Part 3 of Theorem \ref{thm:congruences} follows.

(4) In the case that $a = 2\ell$, from Theorem \ref{thm:general pl} we have that 
\begin{align*}
    p_\ell(2\ell; \ell^4 n -\delta(2\ell; \ell))
    =& \; (f(-\ell; 2\ell)-\ell^2 f(-\ell; \ell) + f(\ell^2; 2)) \cdot p_\ell(0; \ell^4 n - (\ell^2-1)/24) \\ &+ f(-\ell; \ell-1)\cdot \ell^2\cdot p_\ell(0; \ell^4 n - (\ell^2-1)/24 + \ell).
\end{align*}
Note that by (\ref{eq: l core cong}) and the fact that $f(-\ell;\ell-1) \equiv 0 \pmod{\ell}$, both terms are divisible by $\ell^4$. Part 4 of Theorem \ref{thm:congruences} follows. 

(5) In the case that $0<a<\ell(\ell+1)$, from Theorem \ref{thm:general pl} we have that
\begin{align*}
    p_\ell(a;\ell^3 n - \delta(a;\ell)) =& \; p_\ell\Big(0;\ell^3 n-\frac{\ell^2-1}{24}\Big) \cdot \sum_{m=0}^{\lfloor a/\ell\rfloor}f(\ell^2;m)f(-\ell;a-m\ell) \\
    &+ \sum_{j=1}^{\lfloor a/(\ell+1) \rfloor} \Bigg[p_\ell \Big(0;\ell^3 n - \ell j- \frac{\ell^2-1}{24}\Big)
    \cdot f(-\ell^2; j) \\
    & \qquad\qquad  \cdot \sum_{m=0}^{\lfloor (a-j(\ell+1))/\ell \rfloor} f(\ell^2;m) f(-\ell;a-j(\ell+1) - m\ell)\Bigg].
\end{align*}
We have that $f(-\ell^2; j) \equiv 0 \pmod{\ell^2}$ for $1 \le j \le \ell-1$. By (\ref{eq: l core cong}) we also have that 

\noindent$p_\ell(0;\ell^3 n-(\ell^2-1)/24) \equiv 0 \pmod{\ell^3}$ and that $p_\ell(0;\ell^3 n - \ell j- (\ell^2-1)/24) \equiv 0 \pmod{\ell}.$   Part 5 of Theorem \ref{thm:congruences} follows. \qed

\bibliography{References.bib}{}

\begin{thebibliography}{10}

\bibitem{ahlgrenNewsC}
S.~Ahlgren.
\newblock Distribution of the partition function modulo composite integers $m$.
\newblock {\em Mathematische Annalen}, 318(4):795--803, 2000.

\bibitem{AhlgOno}
S.~Ahlgren and K.~Ono.
\newblock Congruence properties for the partition function.
\newblock {\em Proceedings of the National Academy of Sciences},
  98(23):12882--12884, 2001.

\bibitem{Andrews}
G.~E. Andrews.
\newblock {\em The theory of partitions}.
\newblock Number~2. Cambridge University press, 1998.

\bibitem{atkinNewsC}
A.~Atkin.
\newblock Multiplicative congruence properties and density problems for $p(n)$.
\newblock {\em Proceedings of the London Mathematical Society}, 3(3):563--576,
  1968.

\bibitem{Atkin}
A.~O. Atkin.
\newblock Proof of a conjecture of {R}amanujan.
\newblock {\em Glasgow Mathematical Journal}, 8(1):14--32, 1967.

\bibitem{DArcais}
F.~D’Arcais.
\newblock D{\'e}veloppement en s{\'e}rie.
\newblock {\em Interm{\'e}diaire Math}, 20:233--234, 1913.

\bibitem{FrameThR}
J.~S. Frame, G.~d.~B. Robinson, and R.~M. Thrall.
\newblock The hook graphs of the symmetric group.
\newblock {\em Canadian Journal of Mathematics}, 6:316--324, 1954.

\bibitem{GarvanKimStanton}
F.~Garvan, D.~Kim, and D.~Stanton.
\newblock Cranks and $t$-cores.
\newblock {\em Inventiones Mathematicae}, 101(1):1--17, 1990.

\bibitem{GranOno}
A.~Granville and K.~Ono.
\newblock Defect zero $p$-blocks for finite simple groups.
\newblock {\em Transactions of the American Mathematical Society},
  348(1):331--347, 1996.

\bibitem{heimNeu}
B.~Heim and M.~Neuhauser.
\newblock On conjectures regarding the {N}ekrasov--{O}kounkov hook length
  formula.
\newblock {\em Archiv der Mathematik}, 113(4):355--366, 2019.

\bibitem{JamesKerber}
G.~D. James and A.~Kerber.
\newblock The representation theory of the symmetric group. encycl.
\newblock {\em Math. Appl}, 16, 1981.

\bibitem{kloveNewsC}
T.~Kl{\o}ve.
\newblock Recurrence formulae for the coefficients of modular forms and
  congruences for the partition function and for the coefficients of $j(\tau)$,
  $(j(\tau)-1728)^{\frac{1}{2}}$ and $(j(\tau))^{\frac{1}{3}}$.
\newblock {\em Mathematica Scandinavica}, 23(1):133--159, 1968.

\bibitem{kloveNewsC2}
T.~Kl{\o}ve.
\newblock Density problems for $p(n)$.
\newblock {\em Journal of the London Mathematical Society},
  2(Part\_3):504--508, 1970.

\bibitem{kolbergNewsC}
O.~Kolberg.
\newblock Note on the parity of the partition functions.
\newblock {\em Mathematica Scandinavica}, 7:377--378, 1959.

\bibitem{Nakamura}
H.~Nakamura.
\newblock On some generating functions for {M}c{K}ay numbers—prime power
  divisibilities of the hook products of {Y}oung diagrams.
\newblock {\em J. Math. Sci. Univ. Tokyo}, 1(2):321--337, 1994.

\bibitem{NO}
N.~A. Nekrasov and A.~Okounkov.
\newblock {S}eiberg-{W}itten theory and random partitions.
\newblock In {\em The unity of mathematics}, pages 525--596. Springer, 2006.

\bibitem{Newman}
M.~Newman.
\newblock An identity for the coefficients of certain modular forms.
\newblock {\em Journal of the London Mathematical Society}, 1(4):488--493,
  1955.

\bibitem{newmanNewsC}
M.~Newman.
\newblock Note on partitions modulo 5.
\newblock {\em Math. Comp.}, 21:481--482, 1967.

\bibitem{OnoDis}
K.~Ono.
\newblock Distribution of the partition function modulo $m$.
\newblock {\em Annals of Mathematics}, 151:293--307, 2000.

\bibitem{OnoMcK}
K.~Ono.
\newblock Partitions and {M}c{K}ay numbers for ${S}_n$.
\newblock {\em Journal of Combinatorial Theory, Series A}, 108(2):185--197,
  2004.

\bibitem{CBMS}
K.~Ono.
\newblock {\em The Web of Modularity: Arithmetic of the Coefficients of Modular
  Forms and $ q $-series}.
\newblock Number 102. American Mathematical Soc., 2004.

\bibitem{ParkinSh}
T.~R. Parkin and D.~Shanks.
\newblock On the distribution of parity in the partition function.
\newblock {\em Mathematics of Computation}, 21(99):466--480, 1967.

\bibitem{Serre}
J.-P. Serre.
\newblock Divisibilit{\'e} des coefficients des formes modulaires de poids
  entier.
\newblock {\em C.R. Acad. Sci. Paris (A)}, 279:679--682, 1974.

\bibitem{SerreNew}
J.-P. Serre.
\newblock Sur la lacunarit{\'e} des puissances de $\eta$.
\newblock {\em Glasgow Mathematical Journal}, 27:203--221, 1985.

\bibitem{Watson}
G.~N. Watson.
\newblock Ramanujans {V}ermutung über {Z}erfällungszahlen.
\newblock 1938(179):97--128, 1938.

\end{thebibliography}
\bibliographystyle{abbrv}
\end{document}